\def\namedlabel#1#2{\begingroup
#2%
\def\@currentlabel{#2}%
\phantomsection\label{#1}\endgroup
}
\theoremstyle{theorem} 
\newtheorem{theorem}{Theorem}[section]
\newtheorem{corollary}[theorem]{Corollary}
\newtheorem{lemma}[theorem]{Lemma}
\newtheorem{proposition}[theorem]{Proposition}
\newtheorem{theoremx}{Theorem}
\theoremstyle{definition} 
\newtheorem{definition}[theorem]{Definition}
\newtheorem{remark}[theorem]{Remark}
\numberwithin{equation}{subsection}
\definecolor{blue-violet}{rgb}{0.54, 0.17, 0.89}
\definecolor{Blue}{rgb}{0.01, 0.28, 1.0}
\definecolor{gGreen}{rgb}{0.2, 0.8, 0.2}
\definecolor{Green}{rgb}{0.04, 0.85, 0.32}
\newcommand{\sslash}{\mathbin{/\mkern-6mu/}}
\def\@tocline#1#2#3#4#5#6#7{\relax
  \ifnum #1>\c@tocdepth 
  \else
    \par \addpenalty\@secpenalty\addvspace{#2}%
    \begingroup \hyphenpenalty\@M
    \@ifempty{#4}{%
      \@tempdima\csname r@tocindent\number#1\endcsname\relax
    }{%
      \@tempdima#4\relax
    }%
    \parindent\z@ \leftskip#3\relax \advance\leftskip\@tempdima\relax
    \rightskip\@pnumwidth plus4em \parfillskip-\@pnumwidth
    #5\leavevmode\hskip-\@tempdima
      \ifcase #1
       \or\or \hskip 1.9em \or \hskip 2em \else \hskip 3em \fi%
      #6\nobreak\relax
    \dotfill\hbox to\@pnumwidth{\@tocpagenum{#7}}\par
    \nobreak
    \endgroup
  \fi}
\newcommand{\NN}{\mathbb{N}}
\newcommand{\ZZ}{\mathbb{Z}}
\newcommand{\KK}{\mathbb{K}}
\newcommand{\dif}[1]{^{{\langle #1\rangle}}}
\newcommand{\CC}{\mathbb{C}}
\newcommand{\m}{\mathfrak{m}}
\newcommand{\cA}{\mathscr{A}}
\newcommand{\cP}{{\mathcal P}}
\newcommand{\cO}{{\mathcal O}}
\newcommand{\cC}{{\mathcal C}}
\newcommand{\cJ}{{\mathcal J}}
\newcommand{\Spec}{\operatorname{Spec}}
\newcommand{\Hom}{\operatorname{Hom}}
\newcommand{\Rank}{\operatorname{rank}}
\newcommand{\fr}{\operatorname{free.rank}}
\newcommand{\ch}{\operatorname{char}}
\newcommand{\Der}{\operatorname{Der}}
\newcommand{\p}{\mathfrak{p}}
\newcommand{\Nash}{\operatorname{Nash}}
\newcommand{\Pnr}{\mathcal{P}^n_R}
\newcommand{\Pn}{\mathcal{P}^n_X}
\newcommand{\Grr}{\operatorname{Grass}}
\newcommand{\Gr}{\Grr_{\binom{n+d}{d}}(\mathcal{P}^n_X)}
\newcommand{\Ox}{\mathcal O}
\newcommand{\Oxx}{{\mathcal O}_{X,x}}
\newcommand{\I}{{\mathcal I}_X}
\newcommand{\V}{\mathbf{V}}
\newcommand{\diff}{\operatorname{diff}}
\begin{document}

\title[Nash blowups in prime characteristic]{Nash blowups in prime characteristic}

\author[D. Duarte]{Daniel Duarte$^1$}
\address{CONACyT-Universidad Aut\'onoma de Zacatecas, Zac., M\'exico}
\email{aduarte@uaz.edu.mx}

\author[L. N\'u\~nez-Betancourt]{Luis N\'u\~nez-Betancourt${^2}$}
\address{Centro de Investigaci\'on en Matem\'aticas, Guanajuato, Gto., M\'exico}
\email{luisnub@cimat.mx}

\thanks{{$^1$}Partially supported by CONACyT grant 287622.}

\thanks{{$^2$}Partially supported by CONACyT grant 284598 and C\'atedras Marcos Moshinsky.}

\subjclass[2010]{Primary 14B05, 14E15, 16S32, 13A35; Secondary 14J70.}
\keywords{Nash blowups, normal varieties, differential operators, methods in prime characteristic}

\maketitle
\setcounter{tocdepth}{1}

\begin{abstract}
We initiate the study of Nash blowups  in prime characteristic. First, we  show that a normal variety is non-singular if and only if its Nash blowup is an isomorphism, extending a theorem by A. Nobile. We also study higher Nash blowups, as defined by T. Yasuda. Specifically, we give a characteristic-free proof of  a higher version of Nobile's Theorem for quotient varieties and hypersurfaces. We also prove a weaker version for $F$-pure varieties.
\end{abstract}

\tableofcontents

\section{Introduction}

The Nash blowup is a natural modification of an algebraic variety that replaces singular points by limits of tangent spaces at non-singular points.
The main open problem in this topic is  whether the iteration of the Nash blowup solves the singularities of the variety. This question is usually attributed to J. Nash \cite{Nob} but it also appears in the work of J. G. Semple \cite{Semple}.  If true, it would give a canonical way to resolve singularities.
This problem has been an object of intense study \cite{Nob,Reb,GS1,GS2,HiroNash,Spiv,GlezTeis,GrigMil,Ataetal,DuarSurf}. 

In order to be able to achieve a resolution of singularities using Nash blowups, it is needed that this process always modifies a singular variety.
One of the first results that appeared in the theory of Nash blowups is Nobile's Theorem \cite{Nob}. It states that, for equidimensional varieties over $\CC$, the Nash blowup is an isomorphism if and only if the variety is non-singular. In additon to being  of central interest for the theory of Nash blowups, Nobile's Theorem has other applications. For instance, it appears in the study of link theoretic characterization of smoothness \cite{deFYu}. 

Unfortunately, Nobile's Theorem fails  over fields of positive characteristic. There are examples of singular curves over fields of prime characteristic whose Nash blowup is an isomorphism \cite{Nob}. Since the main goal of this theory  is to resolve singularities, these examples discouraged a further study of Nash blowups  in prime characteristic. 
One of the main purposes of this paper is to provide evidence that the classical  Nash blowup in prime characteristic behaves as expected after adding mild hypotheses. In our first main result we provide a version of Nobile's Theorem in prime characteristic for normal varieties.

\begin{theoremx}[{see Theorem \ref{ThmNobileNormal}}]\label{nob normal}
Let $X$ be a normal irreducible variety. If $\Nash_1(X)\cong X$, then $X$ is a non-singular variety. 
\end{theoremx}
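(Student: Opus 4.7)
The plan is to translate the hypothesis $\Nash_1(X)\cong X$ into an invertibility statement for the Jacobian ideal of $\Omega_{X/\KK}$, and then to use normality together with Krull's principal ideal theorem to force that ideal to be the unit ideal, whence smoothness follows by the Jacobian criterion.

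First, I would invoke the standard algebraic description of the Nash blowup $\Nash_1(X)\to X$ as the blowup of $X$ along the $d$-th Fitting ideal sheaf $\operatorname{Fitt}_d(\Omega_{X/\KK})$, where $d=\dim X$. Since the blowup of an ideal is an isomorphism if and only if that ideal is locally principal, the hypothesis $\Nash_1(X)\cong X$ is equivalent to $\operatorname{Fitt}_d(\Omega_{X/\KK})$ being an invertible ideal sheaf on $X$. This identification is the place where working in characteristic $p$ is most delicate, and it is essentially the only step in the argument where the characteristic matters; it relies on $\Omega_{X/\KK}$ having generic rank $d$, which holds because the function field of $X$ is separably generated over $\KK$.

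Next, I would exploit normality in two complementary ways. By Serre's $R_1$ condition the singular locus $\Si(X)$ has codimension at least $2$ in $X$, and by the Jacobian criterion $\Si(X)$ coincides with $V(\operatorname{Fitt}_d(\Omega_{X/\KK}))$; hence the vanishing locus of the Fitting ideal has codimension at least $2$ in $X$. On the other hand, Krull's principal ideal theorem forces the vanishing locus of any proper invertible ideal sheaf on $X$ to have pure codimension $1$. Reconciling these two facts is possible only if $\operatorname{Fitt}_d(\Omega_{X/\KK})=\OO_X$.

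Finally, I would argue that this triviality implies smoothness. The equality $\operatorname{Fitt}_d(\Omega_{X/\KK})=\OO_X$ says that $\Omega_{X/\KK}$ is locally generated by at most $d$ elements everywhere. Combined with its generic rank being $d$, any local surjection $\OO_{X,x}^{\,d}\twoheadrightarrow\Omega_{X/\KK,x}$ is an isomorphism at the generic point, so its kernel is a torsion $\OO_{X,x}$-module; but this kernel is a submodule of the torsion-free module $\OO_{X,x}^{\,d}$, hence vanishes. Thus $\Omega_{X/\KK}$ is locally free of rank $d$, and the standard conormal-sequence computation then yields $\dim_\kappa \mathfrak{m}_x/\mathfrak{m}_x^2=d$ at every closed point $x$, proving that $X$ is regular (hence non-singular). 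The principal obstacle is the invertibility step above: once one has the Fitting-ideal description of the Nash blowup in positive characteristic, the rest of the argument is characteristic-free and rests only on normality via Krull's theorem.
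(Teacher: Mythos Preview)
Your argument is correct and takes a genuinely different route from the paper's proof. You identify $\Nash_1(X)$ with the blowup of $\operatorname{Fitt}_d(\Omega_{X/\KK})$ and then run a short codimension argument: normality gives $R_1$, so $V(\operatorname{Fitt}_d(\Omega_{X/\KK}))=\operatorname{Sing}(X)$ has codimension $\geq 2$, while an invertible proper ideal would force codimension exactly $1$ by Krull; hence the Fitting ideal is trivial and $X$ is smooth by the Jacobian criterion. The only characteristic-sensitive ingredient, as you note, is that $\Omega_{X/\KK}$ has generic rank $d$, which holds because $\KK$ is perfect. By contrast, the paper never invokes the Fitting-ideal description. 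It starts from Lemma~\ref{key} to obtain $\Omega_{R|\KK}\cong R^d\oplus T$, extracts a system $x_1,\dots,x_d\in\m$ with dual derivations $\delta_1,\dots,\delta_d$ via a perfect pairing between differential operators and differential powers, builds from these the divided-power operators $\frac{1}{\alpha!}\delta^\alpha$ for $\alpha_i<p$, and uses them to show that the map $\bigoplus_{\alpha}Re_\alpha\to R^{1/p}$, $e_\alpha\mapsto x^{\alpha/p}$, is an isomorphism in codimension one; the $(S_2)$ condition from normality upgrades this to a global isomorphism, so $R^{1/p}$ is free and Kunz's Theorem gives regularity. Your proof is shorter and uses only classical commutative algebra; the paper's approach, while heavier, builds the differential-operator and Frobenius machinery that it reuses for the $F$-pure case (Theorem~\ref{ThmStReg}) and that is better suited to the higher-order setting, where no clean analogue of the Fitting-ideal blowup description is available.
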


We stress that the hypothesis of $X$ being normal is frequently assumed for many results in characteristic zero. For instance, M. Spivakovsky \cite{Spiv} showed that a sequence of normalized Nash blowups eventually give a resolution of singularities for surfaces. 
Theorem \ref{nob normal} implies that the original question regarding the Nash blowup and resolution of singularities can be reconsidered in arbitrary characteristic by iterating the normalized Nash blowup.

More recently, T. Yasuda \cite{Yas1} introduced a higher-order version of the Nash blowup, denoted as $\Nash_n(X)$, replacing tangent spaces by infinitesimal neighborhoods of order $n$. The main goal for this generalization was to investigate whether $\Nash_n(X)$ would give a one-step resolution of singularities for $n\gg 0$. This question has been settled recently for varieties over $\CC$: it has an affirmative answer for curves \cite{Yas1}, but it is false in general \cite{Toh}. Higher versions of Nobile's Theorem have been proved for some families of varieties \cite{DuarteToric,DuarteHyp,ChaDuaGil}.

On the other hand, T. Yasuda also showed that there are singular curves over fields of positive characteristic whose Nash blowup of any order is an isomorphism. In other words, a higher version of Nobile's theorem over fields of prime characteristic fails in general.  In our following results,  we show that a higher version of Nobile's theorem holds over fields of positive characteristic for some families of varieties.

Furthering the ideas in the proof of  Theorem \ref{nob normal}, we obtain  a weaker version of Nobile's Theorem for $F$-pure varieties and the higher Nash blowup.

\begin{theoremx}[{see Theorem \ref{ThmStReg} and Corollary \ref{CorNobileFpure}}]\label{nob f pure}
Let $X$ be an $F$-pure irreducible variety. If $\Nash_n(X)\cong X$, then $X$ is a strongly $F$-regular variety. In particular, if $\Nash_1(X)\cong X$ for some $n\geq 1$, then $X$ is a non-singular variety. 
\end{theoremx}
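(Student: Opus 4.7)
The plan is to reduce to local algebra, interpret the hypothesis as a statement about principal parts, and invoke a differential-operator criterion for strong $F$-regularity. Fix $x \in X$ with local ring $R = \Oxx$ of dimension $d$. Since $\Nash_n(X)$ is the closure, in a Grassmannian over $X$, of the image of the smooth locus under $x \mapsto (\Pri)_x$, the hypothesis $\Nash_n(X) \cong X$ is equivalent to local freeness of the sheaf $\Pri$ of rank $\binom{n+d}{d}$. Consequently, $\Prir$ is $R$-free of rank $\binom{n+d}{d}$, and dualising, $D^{(n)}_R := \Hom_R(\Prir, R)$, the module of $\KK$-linear differential operators of order at most $n$, is $R$-free of the same rank.

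The strategy for obtaining strong $F$-regularity is to invoke (an appropriate form of) the characterization due to K.~Smith: an $F$-finite $F$-pure ring $R$ is strongly $F$-regular if and only if $R$ is simple as a module over its ring $D(R, \KK)$ of $\KK$-linear differential operators. Granting $F$-purity, it therefore suffices to show that $R$ is $D$-simple: every non-zero $D$-stable ideal $I \subseteq R$ equals $R$. Given $0 \neq c \in I$, the aim is to produce $\delta \in D(R, \KK)$ with $\delta(c) \notin \mathfrak{m}$; since $\delta(c) \in I$ by $D$-stability, this will force $I = R$. The existence of such $\delta$ is pursued by using a basis of the free module $D^{(n)}_R$ dual to the cotangent-like module $\Prir/\mathfrak{m}\Prir$, together with the Frobenius splittings coming from $F$-purity, which are needed to reach differential behaviour beyond the fixed order $n$.

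For the ``in particular'' statement, suppose $\Nash_n(X) \cong X$ for some $n \geq 1$. The first conclusion gives that $X$ is strongly $F$-regular, hence normal. The truncation $\Pri \twoheadrightarrow {\mathcal P}^1_X$ induces a factorization $\Nash_n(X) \to \Nash_1(X) \to X$, and the hypothesis that the composition is an isomorphism means the proper birational morphism $\Nash_1(X) \to X$ admits a section $s$. As $s$ is a section of a separated morphism, it is a closed immersion; its image is an irreducible closed subscheme of $\Nash_1(X)$ of dimension $\dim X = \dim \Nash_1(X)$, hence equal to $\Nash_1(X)$ by irreducibility, and (both being reduced varieties) $s$ is an isomorphism. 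Therefore $\Nash_1(X) \cong X$, and Theorem~\ref{nob normal} applied to the normal variety $X$ yields that $X$ is non-singular.

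The main obstacle is the $D$-simplicity step. In positive characteristic, first-order differentiation of $p$-th powers of local parameters vanishes, so the freeness of $\Prir$ at the fixed order $n$ does not by itself supply an operator $\delta$ with $\delta(c) \notin \mathfrak{m}$ when, for instance, $c = x^{p^m}$ with $p^m > n$. The delicate point is to feed the $F$-pure splitting back into the argument: by precomposing a carefully chosen order-$\leq n$ operator with a high Frobenius power and then composing with the $F$-purity splitting of that Frobenius, one aims to effectively reach arbitrary $c$ using only the order-$n$ operators provided by the freeness hypothesis. Getting this interplay between the differential structure of $X$ and the Frobenius morphism to work correctly is the technical core of the argument.
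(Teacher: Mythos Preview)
Your proposal correctly identifies the target ($D$-simplicity) and the decisive external input (Smith's criterion, Theorem~\ref{ThmSmith}), but it does not actually prove $D$-simplicity: you yourself flag the step of producing $\delta\in D_{R|\KK}$ with $\delta(c)\notin\mathfrak m$ as ``the main obstacle'' and leave it as a sketch about composing order-$n$ operators with Frobenius splittings. That sketch is not carried out, and it is not clear it can be: $F$-purity gives one splitting $R^{1/p}\to R$, not a supply of Cartier operators rich enough to hit an arbitrary $c$ after composing with a fixed finite-order differential. So as written there is a genuine gap.

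The paper's route avoids this entirely by passing through differential powers and the differential core. From Lemma~\ref{key} one only gets that $\mathcal P^n_{R|\KK}$ has a \emph{free summand} of rank $\binom{n+d}{d}$ (your stronger claim that $\mathcal P^n_{R|\KK}$ is itself free is not what the Grassmannian universal property yields; there may be torsion). By Proposition~\ref{PropDifPowerFreeRank} this free rank equals $\dim_\KK(R/\mathfrak m^{\langle n+1\rangle})$, so $\dim_\KK(R/\mathfrak m^{\langle n+1\rangle})=\binom{n+d}{d}$. Now let $\mathfrak p=\mathfrak p_{\mathrm{diff}}(R)=\bigcap_t\mathfrak m^{\langle t\rangle}$ and $c=\dim(R/\mathfrak p)$. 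Since $\mathfrak p\subseteq\mathfrak m^{\langle n+1\rangle}$ and differential powers of $\overline{\mathfrak m}$ in $R/\mathfrak p$ lie under those of $\mathfrak m$, one gets $\binom{n+d}{d}\leq\dim_\KK\bigl((R/\mathfrak p)/\overline{\mathfrak m}^{\langle n+1\rangle}\bigr)\leq\binom{n+c}{c}$, forcing $c=d$ and hence $\mathfrak p=0$. Proposition~\ref{PropCoreDsimple} then gives $D$-simplicity directly, with no operator construction at all. This is the missing idea in your approach.

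For the ``in particular'' clause, note that the statement (and Corollary~\ref{CorNobileFpure}) concerns $\Nash_1(X)\cong X$; the paper simply observes that strong $F$-regularity gives normality and then invokes Theorem~\ref{ThmNobileNormal}. Your detour through a factorization $\Nash_n(X)\to\Nash_1(X)\to X$ is unnecessary for what is claimed, and the existence of that factorization is not justified (a surjection $\mathcal P^n_X\twoheadrightarrow\mathcal P^1_X$ does not automatically induce a morphism between the corresponding Grassmannian closures).
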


It is worth mentioning that strongly $F$-regular varieties have mild singularities, for instance, they are normal and Cohen-Macaulay. 

We also study a higher-order version of Nobile's Theorem for quotient varieties. In addition, we extend to fields of prime characteristic the  higher version of  Nobile's Theorem for normal hypersurfaces over $\CC$. We point out that the proofs of the following theorems are characteristic-free.

\begin{theoremx}[{see Theorem \ref{ThmNobileQuot}}]\label{group} Let $G$ be a finite non-trivial group such that 
$|G|$ has a multiplicative inverse in $K$.
Suppose that $G$ acts linearly on a polynomial ring $R$ over $\KK$, and  that $G$ contains no elements that fix a hyperplane in the space of one-forms $[R]_1$. Let $X=\Spec(R^G)$. Then, $\Nash_n(X)\not\cong X$.
\end{theoremx}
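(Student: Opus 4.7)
The plan is to argue by contradiction: assume $\Nash_n(X) \cong X$. By the Grassmannian description of the higher Nash blowup, this is equivalent to $\mathcal{P}^n_{S/\KK}/\tors$ being a locally free $S$-module of rank $N := \binom{n+d}{d}$, where $S := R^G$ and $d := \dim X$. Because the $G$-action on $R$ is linear, $S$ is positively graded with $S_0 = \KK$ and unique maximal graded ideal (the origin of $X$), and $\mathcal{P}^n_{S/\KK}$ is compatibly graded; since a finitely generated graded projective module over such a ring is free, $\mathcal{P}^n_{S/\KK}/\tors$ is $S$-free of rank $N$. Dualizing over $S$, the module of differential operators $D^{(n)}(S/\KK) := \Hom_S(\mathcal{P}^n_{S/\KK}, S)$ is also $S$-free of rank $N$.

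The invertibility of $|G|$ in $\KK$ supplies the Reynolds operator and yields the classical identification $D^{(n)}(S/\KK) \cong D^{(n)}(R/\KK)^G$. Writing $D^{(n)}(R/\KK) \cong R \otimes_\KK V_n$ as an $(R,G)$-bimodule, with $V_n = \operatorname{Sym}^{\leq n}(W^*)$ and $W := [R]_1$, and using the isotypic decomposition $R = \bigoplus_{\rho \in \widehat G} \rho \otimes_\KK R_\rho$, where $R_\rho := \Hom_G(\rho, R)$ is a finitely generated $S$-module of rank $\dim\rho$, one obtains
\[
D^{(n)}(R/\KK)^G \;\cong\; \bigoplus_{\rho \in \widehat G} \Hom_G(\rho^*, V_n) \otimes_\KK R_\rho.
\]
For this direct sum to be $S$-free of rank $N$, every isotypic component $R_\rho$ with $\Hom_G(\rho^*, V_n) \neq 0$ must itself be $S$-free of rank $\dim\rho$. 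Because $V_n \supseteq V_1 = \KK \oplus W^*$ for every $n \geq 1$, this in particular forces $R_\rho$ to be $S$-free for every irreducible constituent $\rho$ of $W$.

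The principal obstacle is the remaining invariant-theoretic step: deducing from the $S$-freeness of $R_\rho$ for every $\rho \subseteq W$ that $R$ itself is $S$-free, which by the Chevalley--Shephard--Todd theorem would force $G$ to be generated by pseudo-reflections, contradicting the hypothesis. I would carry this out by propagating $S$-freeness inductively via the surjective multiplication maps $W \otimes \operatorname{Sym}^{k-1}(W) \twoheadrightarrow \operatorname{Sym}^k(W)$ to obtain $S$-freeness of $R_\rho$ for every $\rho \in \widehat G$, and hence of $R = \bigoplus_\rho \rho \otimes R_\rho$; a Hilbert-series computation via Molien's formula would provide an alternative route. For the case $n = 1$, one may bypass this step by invoking Theorem \ref{ThmNobileNormal} directly, as $X$ is normal (since $|G|$ is invertible) and singular by Chevalley--Shephard--Todd; but to obtain the conclusion uniformly for every $n \geq 1$ one needs the freeness-propagation argument sketched above, which is the heart of the proof.
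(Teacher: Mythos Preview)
Your approach is quite different from the paper's and, as you yourself flag, incomplete at the crucial step. The paper does not pass through representation theory or Chevalley--Shephard--Todd at all. Instead it works directly with differential powers: under the hypotheses the inclusion $S=R^G\hookrightarrow R$ is \emph{order-differentially extensible} (unramified in codimension one), which yields $\eta^{\langle n+1\rangle}=\m^{n+1}\cap S$ for the graded maximal ideals $\eta\subset S$ and $\m\subset R$. Then one simply counts
\[
\dim_\KK\bigl(S/\eta^{\langle n+1\rangle}\bigr)=\sum_{j\le n}\dim_\KK[S]_j\le\sum_{j\le n}\dim_\KK[R]_j=\binom{n+d}{d},
\]
with strict inequality in degree one because $[S]_1=W^G\subsetneq W$ (as $G$ is nontrivial and acts faithfully). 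By Proposition~\ref{PropDifPowerFreeRank} this says the free rank of $\mathcal{P}^n_{S_\eta|\KK}$ is strictly less than $\binom{n+d}{d}$, contradicting Lemma~\ref{key}. No freeness-of-$R$ argument is needed.

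In your route, the step you label ``the heart of the proof'' is a genuine gap, and the sketch you give does not close it. Knowing that $R_\rho$ is $S$-free for each irreducible constituent $\rho$ of $W$ does not propagate to all $\rho\in\widehat G$ via the surjections $W\otimes\operatorname{Sym}^{k-1}W\twoheadrightarrow\operatorname{Sym}^k W$: those are surjections of $G$-representations, not of the $S$-modules $R_\rho$, and a quotient of a free module need not be free. The multiplication maps $R_\rho\otimes_S R_{\rho'}\to R_\sigma$ you would need are neither obviously surjective nor isomorphisms, and a Molien/Hilbert-series argument cannot detect freeness, only rank. A secondary issue is that the order-filtered identification $D^{n}_{S|\KK}\cong (D^{n}_{R|\KK})^G$ is more delicate than ``classical'': the unfiltered statement $D_{S|\KK}\cong D_{R|\KK}^G$ is known, but compatibility with the order filtration requires exactly the differential-extensibility input that the paper invokes, and you should cite it rather than assume it.
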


\begin{theoremx}[{see Theorem \ref{ThmNobileHyp}}]\label{nob hypers}
Let $X$ be a normal hypersurface. If $\Nash_n(X)\cong X$, then $X$ is a non-singular variety.
\end{theoremx}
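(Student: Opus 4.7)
The plan is to mimic the characteristic-zero argument of \cite{DuarteHyp} for higher Nobile theorems on hypersurfaces, systematically replacing iterated partial derivatives by Hasse--Schmidt derivations so that the algebraic manipulations remain valid when $\ch \KK > 0$. Suppose for contradiction that $X$ is singular at some point $x$ while $\Nash_n(X) \cong X$. Localizing, one may assume $R = \OO_{X,x} = S/(f)$ with $S$ a regular local ring of dimension $d+1$ and $d = \dim X$. The isomorphism $\Nash_n(X) \cong X$ is equivalent to the quotient $\Prir/\tors$ being locally free of the expected rank $\binom{n+d}{d}$ over $R$, which is the condition that needs to be exploited.

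The key step is to write down a characteristic-free presentation of $\Prir$ and then perform a Fitting-ideal analysis. The Taylor expansion map induces a surjection $\mathcal{P}^n_S \otimes_S R \twoheadrightarrow \Prir$ whose kernel is the $R$-submodule generated by all Hasse--Schmidt derivatives of $f$ of orders $1,\dots,n$. Since $\mathcal{P}^n_S$ is free of rank $\binom{n+d+1}{d+1}$, the assumption that $\Prir/\tors$ is locally free of rank $\binom{n+d}{d}$ translates, after a standard minors computation, into the statement that a specific Fitting ideal built from these Hasse--Schmidt derivatives of $f$ must equal $R$. Invoking normality, the singular locus of $X$ has codimension at least two, so this ideal condition is controlled by behavior on codimension-one points, where $X$ is smooth. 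One then deduces that the first-order Hasse--Schmidt derivatives of $f$, which coincide with the ordinary partial derivatives, generate the unit ideal at $x$, contradicting the assumption that $x$ is singular.

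The main obstacle is justifying the characteristic-free Fitting-ideal computation. In characteristic zero one can pass freely between iterated partials and the higher Taylor coefficients of $f$, but in positive characteristic many iterated partials vanish while the corresponding Hasse--Schmidt derivatives do not. One must therefore set up the presentation of $\Prir$ using the Hasse--Schmidt structure directly, verify that the presentation matrix has the expected rank generically using the smoothness of the regular locus, and then check that the resulting minor ideal controls the singularities exactly as in the classical case. Once this presentation is firmly in place, the remainder of the argument is purely commutative-algebraic and characteristic-insensitive, explaining why the authors can advertise the conclusion as characteristic-free.
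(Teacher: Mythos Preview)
The paper's argument is much shorter and takes a different route. From Lemma~\ref{key} one has $\mathcal{P}^n_{R|\KK}\cong R^{\binom{n+d}{d}}\oplus T$ with $T$ the torsion submodule, for $R=\mathcal{O}_{X,x}$. The paper then invokes two external results, both already established in characteristic-free form: (i) for a normal hypersurface the module $\mathcal{P}^n_{R|\KK}$ is torsion-free \cite[Theorem~4.3]{BarDuar}, so $T=0$ and $\mathcal{P}^n_{R|\KK}$ is free of rank $\binom{n+d}{d}$; and (ii) freeness of $\mathcal{P}^n_{R|\KK}$ forces $R$ to be regular \cite[Theorem~3.1]{BarDuar}, \cite[Theorem~10.2]{BJNB}. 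No Hasse--Schmidt bookkeeping or Fitting-ideal analysis is carried out here; that work is packaged into the cited references.

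Your plan has a concrete gap at the Fitting-ideal step. The presentation you describe, with kernel generated by the Hasse--Schmidt derivatives of $f$, presents $\mathcal{P}^n_R$, not $\mathcal{P}^n_R/\tors$. Knowing only that $\mathcal{P}^n_R/\tors$ is free of rank $r=\binom{n+d}{d}$ does \emph{not} force $\operatorname{Fitt}_r(\mathcal{P}^n_R)=R$: in general $\operatorname{Fitt}_r(R^r\oplus T)=\operatorname{Fitt}_0(T)$, which is the unit ideal only when $T=0$. So before any minors computation can produce a unit you must already know that the torsion vanishes---precisely the content of the theorem the paper cites. Your appeal to normality (``controlled by behaviour on codimension-one points'') does not close this gap: at height-one primes $R_Q$ is regular and every Fitting condition holds there automatically, but that says nothing about the Fitting ideal at the singular point $x$ itself. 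Likewise, the final step---passing from a higher-order Fitting ideal being the unit ideal to the first-order Jacobian ideal being the unit ideal at $x$---is asserted without justification and is not obvious.
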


Theorems \ref{group} and \ref{nob hypers} suggest that the question regarding one-step resolution via higher Nash blowups can be reconsidered in arbitrary characteristic for hypersurfaces and quotient varieties. In a second paper, we further study this question and higher-order versions of Nobile's Theorem in arbitrary characteristic for toric varieties. There we use combinatorial techniques, which differ significantly from the ones appearing in the present paper.

We end this introduction with a few comments about the techniques used in this manuscript.
B. Teissier \cite{Hunt} pioneered the use of derivations to study the Nash blowup in characteristic zero.  We further this line of research by using rings of diferential operators and  modules of principal parts in any characteristic. These techniques played a key role to prove all the main result in this manuscript. In particular, we use new developments in this line focused on singularities \cite{BJNB} and homological methods \cite{dACDuarte,BarDuar}. Furthermore,  we combine this approach with the use of Frobenius map to detect regularity \cite{Kunz} and certain type of singularities \cite{DModFSplit} to prove Theorems \ref{nob normal} and \ref{nob f pure}.

$ $

\noindent\textbf{Convention: }Throughout this paper, $\KK$ denotes an algebraically closed field and all varieties are assumed to be irreducible. In particular, $X$ always denotes an irreducible variety over $\KK$.
 We denote as $\NN$ the set of non-negative integers and $\ZZ^+$ the set of positive integers. By a local $\KK$-algebra $(R,\m,\KK)$, we mean a local ring $R$ with maximal ideal $\m$ such that $\KK\subseteq R$ and the map $\KK\hookrightarrow R\twoheadrightarrow  R/\m$ is an isomorphism.

\section{Nash blowups and  Nobile's Theorem}

In this section we recall the definition of Nash blowups of algebraic varieties. Then we discuss a classical theorem of A. Nobile \cite{Nob} in the theory of Nash blowups that characterizes smoothness in terms of these blowups.

Let $X$ be an irreducible algebraic variety of dimension $d$ over an algebraically closed field $\KK$ of arbitrary characteristic. Let $\I$ be the sheaf of ideals defining the diagonal $\Delta\hookrightarrow X\times X$. Let $\Pn:=\Ox_{X\times X}/\I^{n+1}$ be the sheaf of principal parts of order $n$ of $X$. Denote as $\Gr$ the Grassmanian of locally free quotients of $\Pn$ of rank $\binom{n+d}{d}$ and let $G_n:\Gr\to X$ be the structural morphism. 

The Grassmanian satisfies the following universal property \cite{EGAI}. Let $h:Y\to X$ be a morphism. Then $h^*\Pn$ has a locally free quotient of rank ${\binom{n+d}{d}}$ if and only if there exists $h':Y\to\Gr$ such that the following diagram commutes:
$$\xymatrix{Y \ar@{->}[r]\ar@{->}[dr]_{h} & \Gr \ar@{->}[d]^{G_n}\\ & X}$$

Let $U\subseteq X$ be the set of non-singular points of $X$, and let  $i:U\hookrightarrow X$ the inclusion morphism. Since $i^*\Pn=\Pn|_U$ is locally free of rank ${\binom{n+d}{d}}$, we have that there exists a morphism $\sigma:U\to\Gr$  by the universal property of  Grassmanians. 

\begin{definition}[{\cite{Nob, OnZa,Yas1}}]
Let $\Nash_n(X)$ denote the closure of $\sigma(U)$ in $\Gr$ with its reduced scheme structure, and let $\pi_n:\Nash_n(X) \to X$ be the restriction of $G_n$. We call $(\Nash_n(X),\pi_n)$ the \textit{Nash blowup of order n of} $X$.
\end{definition}

\begin{remark}
T. Yasuda defines the Nash blowup of order $n$ of $X$ using a different parameter space: the Hilbert scheme of points. Both definitions are equivalent \cite[Proposition 1.8]{Yas1}.
\end{remark}

The following theorem is a classical result in the theory of the usual Nash blowup.

\begin{theorem}[{Nobile's Theorem \cite{Nob}}]\label{Nob thm}
If $\ch(\KK)=0$, then  $\Nash_1(X)\cong X$ if and only if $X$ is non-singular.
\end{theorem}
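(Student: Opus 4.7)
The easy direction goes as follows. If $X$ is non-singular, then $\Omega^1_{X/\KK}$ is locally free of rank $d$, so the exact sequence $0 \to \Omega^1_{X/\KK} \to \mathcal{P}^1_X \to \Ox_X \to 0$ shows that $\mathcal{P}^1_X$ is locally free of rank $d+1$. Taking the tautological identity quotient, the universal property of the Grassmannian yields a morphism $X \to \Grr$ extending $\sigma$, whose image is $\Nash_1(X)$. Hence $\Nash_1(X) \cong X$.

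For the converse, I assume $\pi_1 \colon \Nash_1(X) \to X$ is an isomorphism. Then $\pi_1^{-1}$ composed with $\Nash_1(X) \hookrightarrow \Grr$ gives a morphism $h \colon X \to \Grr$ with $G_1 \circ h = \id_X$, and by the universal property this produces a locally free quotient $\phi \colon \mathcal{P}^1_X \onto Q$ of rank $d+1$. Since $h|_U = \sigma$, the map $\phi$ restricts to the identity on $U$, so $K := \ker \phi$ is supported on $X \setminus U$. The augmentation $\mathcal{P}^1_X \to \Ox_X$ has the $\Ox_X$-linear section $r \mapsto r \otimes 1$, giving a splitting $\mathcal{P}^1_X \cong \Omega^1_{X/\KK} \oplus \Ox_X$. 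The image of $K$ under the projection to $\Ox_X$ is a coherent subsheaf of $\Ox_X$ supported on a proper closed subset of the irreducible variety $X$, and so must vanish. Thus $K \subseteq \Omega^1_{X/\KK}$, and the induced sequence $0 \to \overline{\Omega} \to Q \to \Ox_X \to 0$ with $\overline{\Omega} := \Omega^1_{X/\KK}/K$ splits (as $\Ox_X$ is locally free), so $\overline{\Omega}$ is locally free of rank $d$. Projectivity of $\overline{\Omega}$ then gives a local splitting $\Omega^1_{X/\KK} \cong K \oplus \overline{\Omega}$ with $K$ a torsion summand.

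The conclusion in characteristic zero uses the dual picture. At a closed point $x$ with local ring $(R,\m)$, dualizing kills the torsion summand over the domain $R$, so $\Der_\KK(R) \cong \overline{\Omega}_x^\vee$ is free of rank $d$. I choose a basis $D_1, \ldots, D_d$ of $\Der_\KK(R)$ and elements $f_1, \ldots, f_d \in \m$ such that $\overline{df_1}, \ldots, \overline{df_d}$ form the dual basis of $\overline{\Omega}_x$, so that $D_i(f_j) = \delta_{ij}$. Using the fact that $1/n! \in \KK$, iterated application of the $D_i$ expands each element of the completion $\widehat{R}$ as a formal Taylor series in the $f_i$, yielding an isomorphism $\KK\ps{T_1, \ldots, T_d} \xrightarrow{\sim} \widehat{R}$ via $T_i \mapsto f_i$. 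Hence $R$ is regular, and $X$ is non-singular at every point. The main obstacle lies in this last step: the derivations $D_i$ need not commute, which is handled by the formal inverse function theorem (equivalently, Cohen's structure theorem for complete local rings). This step crucially requires characteristic zero, and its failure in prime characteristic is precisely what allows the counterexamples motivating the present paper.
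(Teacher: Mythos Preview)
The paper does not itself prove Nobile's Theorem; it is quoted as the classical result of \cite{Nob}, and the subsequent Remark records that Teissier's proof proceeds from Lemma~\ref{key} via ``a result by O.~Zariski regarding derivations which allows to apply induction on the dimension of the ring.'' Your argument follows exactly this outline, and everything up through the construction of a basis $D_1,\dots,D_d$ of $\Der_\KK(R)$ and elements $f_1,\dots,f_d\in\m$ with $D_i(f_j)=\delta_{ij}$ is correct and matches (indeed refines) Lemma~\ref{key}.

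The final paragraph, however, has a genuine gap. First, the obstacle you flag is illusory: since the $D_i$ form an $R$-basis of $\Der_\KK(R)$ and $[D_i,D_j]$ is again a derivation, writing $[D_i,D_j]=\sum_k a_k D_k$ and evaluating at $f_\ell$ gives $a_\ell=D_i(\delta_{j\ell})-D_j(\delta_{i\ell})=0$, so the $D_i$ already commute. Second, and this is the real problem, neither the formal inverse function theorem nor Cohen's structure theorem delivers what you need. Both concern presentations of $\widehat R$ as a \emph{quotient} of a power series ring; neither shows that the map $\KK\ps{T_1,\dots,T_d}\to\widehat R$, $T_i\mapsto f_i$, is \emph{surjective}. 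Surjectivity is the entire content here: it amounts to the $f_i$ generating $\m$, i.e.\ to the torsion summand $K\subseteq\Omega^1_{R|\KK}$ satisfying $K\otimes R/\m=0$, which is precisely what remains to be proved. The tool the paper's Remark points to is Zariski's lemma: in characteristic zero, $D(f)=1$ with $f\in\m$ forces $\widehat R\cong R_0\ps{f}$ with $R_0=\ker\widehat D$ (via the convergent operator $r\mapsto\sum_{n\ge 0}\tfrac{(-f)^n}{n!}D^n(r)$). Iterating this with the commuting $D_i$ yields $\widehat R\cong R_*\ps{f_1,\dots,f_d}$ with $R_*$ Artinian local; since $R$ is reduced and excellent, $\widehat R$ and hence $R_*$ are reduced, so $R_*=\KK$ and $R$ is regular.
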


There are generalizations of this result for $n\geq1$ in some cases \cite{DuarteToric,DuarteHyp,ChaDuaGil}. On the other hand, it is well known that this result is not true if $\ch(\KK)>0$. The classical counterexample is given by the cusp. If $X=\V(x^3-y^2)$ and $\ch(\KK)=2$, then $\Nash_n(X)\cong X$ for all $n\geq1$ (for $n=1$ this was proved by A. Nobile \cite{Nob}, and for $n\geq1$ by T. Yasuda \cite{Yas1}).

We are interested in studying analogs of Theorem \ref{Nob thm} for $n\geq1$  in arbitrary characteristic. Because of the previous example, it is necessary to add extra conditions on the variety if $\ch(\KK)>0$. We prove that Nobile's Theorem, or weaker versions of it, hold for some families of varieties. 

B. Teissier  gave a different proof of Nobile's Theorem \cite{Hunt} using the module of differentials and derivations in characteristic zero. A key part of his proof is that  $\Nash_1(X)\cong X$ implies that the module of differentials have a free summand of maximal rank. 
We give an extension to this fact to the  module of principal parts following the same ideas.
We give a proof of this result to stress that it is characteristic-free.

\begin{lemma}[\cite{Hunt}]\label{key}
Let $X$ be a variety of dimension $d$. Assume that  $\Nash_n(X)\cong X$ is an isomorphism. Then, 
$$\mathcal{P}^n_{\Oxx|\KK}\cong\Oxx^{\binom{n+d}{d}}\oplus T_x$$
for each $x\in X$,
where $\mathcal{P}^n_{\Oxx|\KK}$ is the module of principal parts of $\Oxx$ and $T_x$ is its torsion module.
\end{lemma}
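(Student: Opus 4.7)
The plan is to reverse-engineer the decomposition from the universal property of the Grassmannian. If $\pi_n:\Nash_n(X)\to X$ is an isomorphism, then its inverse composed with the inclusion $\Nash_n(X)\hookrightarrow\Grr_{\binom{n+d}{d}}(\mathcal{P}^n_X)$ produces a morphism $h:X\to\Grr_{\binom{n+d}{d}}(\mathcal{P}^n_X)$ over $X$. By the universal property of the Grassmannian recalled right before the statement, such an $h$ corresponds to a locally free quotient $\mathcal{P}^n_X\twoheadrightarrow\mathcal{F}$ with $\mathcal{F}$ of rank $\binom{n+d}{d}$. Localizing at a point $x\in X$, one obtains a surjection $q_x:\mathcal{P}^n_{\Oxx|\KK}\twoheadrightarrow\Oxx^{\binom{n+d}{d}}$.

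Next I would split this surjection: since the target $\Oxx^{\binom{n+d}{d}}$ is free and hence projective, $q_x$ admits a section and therefore $\mathcal{P}^n_{\Oxx|\KK}\cong\Oxx^{\binom{n+d}{d}}\oplus K_x$, where $K_x=\Ker(q_x)$. It remains to identify $K_x$ with the torsion submodule $T_x$.

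For this final step I would use a generic rank computation. The variety $X$ is irreducible and contains a dense smooth open set $U$, on which $\mathcal{P}^n_{X/\KK}$ is locally free of rank $\binom{n+d}{d}$. Hence the generic rank of $\mathcal{P}^n_{\Oxx|\KK}$ over $\Oxx$ is exactly $\binom{n+d}{d}$. Tensoring the decomposition with the fraction field $\Frac(\Oxx)$ shows that $K_x$ has generic rank zero, so $K_x\subseteq T_x$. Conversely, the free summand $\Oxx^{\binom{n+d}{d}}$ is torsion-free, so the projection onto $K_x$ is injective on $T_x$, giving $T_x\subseteq K_x$. Therefore $K_x=T_x$, yielding the claimed decomposition.

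The only non-routine point is the first step: unpacking precisely which locally free quotient of $\mathcal{P}^n_X$ is produced by the isomorphism $\Nash_n(X)\cong X$. Once the universal property is invoked correctly, the splitting and the torsion identification follow from standard rank arguments, and the proof is indeed characteristic-free since no differential-theoretic input specific to characteristic zero enters.
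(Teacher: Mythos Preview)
Your proof is correct and follows essentially the same approach as the paper: both invoke the universal property of the Grassmannian via the map $X\to\Nash_n(X)\hookrightarrow\Grr_{\binom{n+d}{d}}(\mathcal{P}^n_X)$ to obtain a locally free quotient of $\mathcal{P}^n_X$, localize to get a surjection onto a free module, split it, and then identify the kernel with the torsion submodule using the rank computation. The paper compresses your final two paragraphs into the single sentence ``Since $\Oxx^{\binom{n+d}{d}}$ is free and $\Rank(\mathcal{P}^n_{\Oxx|\KK})=\binom{n+d}{d}$, we conclude\ldots'', but the content is the same.
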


\begin{proof}
Since  $\Nash_n (X)\cong X$, we have the following commutative diagram:
$$\xymatrix{X\ar[dr]_{Id}\ar[r]^{\pi_n^{-1}}&\Nash_n(X)\ar@{^(->}[r]\ar[d]^{\pi_n}&\Gr\ar[dl]^{G_n}\\&X}$$
By the universal property of the Grassmanian, $Id^*\Pn=\Pn$ has a locally free quotient of rank $\binom{n+d}{d}$.
Then, there exists a surjective morphism
$$\xymatrix{\Pn\ar[r]&\mathcal{L}\ar[r]&0},$$
where $\mathcal{L}$ is a locally free $\Ox_X$-module of rank $\binom{n+d}{d}$. Therefore, for each $x\in X$ the previous exact sequence induces
\begin{align}\label{surj}
\xymatrix{\mathcal{P}^n_{\Oxx|\KK}\ar[r]&\Oxx^{\binom{n+d}{d}}\ar[r]&0}.
\end{align}
Since $\Oxx^{\binom{n+d}{d}}$ is free and $\Rank(\mathcal{P}^n_{\Oxx|\KK})=\binom{n+d}{d}$, we conclude that $\mathcal{P}^n_{\Oxx|\KK}\cong\Oxx^{\binom{n+d}{d}}\oplus T_x,$ where $T_x$ is the torsion submodule of $\mathcal{P}^n_{\Oxx|\KK}$.
\end{proof}

\begin{remark}
For $n=1$ and $\ch(\KK)=0$, the existence of the surjective morphism (\ref{surj}) is used by B. Teissier to prove that $\Oxx$ is a regular local ring, implying Nobile's Theorem. The proof strongly uses a result by O. Zariski regarding derivations which  allows to apply induction on the dimension of the ring. Unfortunately, it is not clear how to extend Zariski's result for higher-order differential operators.
\end{remark}

\section{Analogs of Nobile's Theorem for normal  and $F$-pure varieties}

We start by recalling definitions and properties regarding differential operators that 
are used to prove Theorem \ref{ThmNobileNormal}.

\begin{definition}[\cite{EGA}]
Let $R$ be a   $\KK$-algebra.
The \textit{$\KK$-linear differential operators of $R$ of order $n$},\index{$D^n_{R \vert \KK}$}\index{$D_{R \vert \KK}$} $D^{n}_{R|\KK}\subseteq \Hom_\KK(R,R)$, are defined inductively as follows:
\begin{itemize} 
\item[(i)]  $D^{0}_{R|\KK} =\Hom_\KK (R,R).$ 
 \item[(ii)]  $D^{n}_{R|\KK} = \{\delta\in \Hom_\KK (R,R)\;|\; \delta r
 - r \delta \in D^{n-1}_{R|\KK} \;\forall \; r \in R \}.$ 
\end{itemize} 
The ring of $\KK$-linear differential operators is defined by $D_{R|\KK}=\displaystyle\bigcup_{n\in\NN}D^{n}_{R|\KK}$.
\end{definition}

\begin{definition}
Let $(R,\m,\KK)$ be a  local $\KK$-algebra with $\KK$ as a coefficient field.
We define the $n$-th differential powers \cite{SurveySP} of $\m$ by
$$
\m\dif{n}=\{f\in R \, | \, \delta(f)\in \m \hbox{ for all } \delta\in D^{n-1}_{R|\KK}\}
$$
for $n\in\ZZ^+$.
The differential core of $R$ \cite{BJNB} is defined by
$\p_{\diff}(R)=\bigcap_{n\in\ZZ^+} \m\dif{n}$.
\end{definition}

\begin{proposition}[{\cite[Proposition 4.15.]{BJNB}}]\label{PropDifPowerFreeRank}
Let $(R, \m, \KK)$ be a local $\KK$-algebra  with $\KK$ as a coefficient field.
Then,
$$
\dim_\KK (R/\m\dif{n+1})=\fr (\cP^n_{R|\KK}).
$$
\end{proposition}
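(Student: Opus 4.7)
The plan is to convert the statement into a non-degenerate pairing between two explicit finite-dimensional $\KK$-vector spaces. The first ingredient is a general description of the free rank: for any finitely generated module $M$ over a local $\KK$-algebra $(R,\m,\KK)$ with $\KK$ as coefficient field,
$$\fr(M) \;=\; \dim_\KK \operatorname{image}\bigl(\Hom_R(M,R) \longrightarrow \Hom_R(M,R/\m)\bigr),$$
where the map is post-composition with the projection $R \twoheadrightarrow R/\m$, and the target is canonically $\Hom_\KK(M/\m M,\KK)$. The inequality $\fr(M) \leq \dim_\KK\operatorname{image}$ is clear from projecting to a free summand; for the reverse inequality, given $\phi_1,\dots,\phi_k$ in the source whose reductions are linearly independent on $M/\m M$, one picks lifts $m_j$ of a dual basis. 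The matrix $(\phi_i(m_j))$ reduces to the identity modulo $\m$, hence is invertible over $R$, and replacing the $\phi_i$ by suitable $R$-linear combinations gives $\phi_i(m_j)=\delta_{ij}$, which splits off a free summand of rank $k$.

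Next I would apply this to $M=\cP^n_{R|\KK}$. The universal property of the module of principal parts identifies $\Hom_R(\cP^n_{R|\KK},R)$ with $D^n_{R|\KK}$, so the source of the reduction map is understood. To describe the target, I would compute $\cP^n_{R|\KK}/\m\,\cP^n_{R|\KK}$ via the surjective $\KK$-algebra map $\phi\colon R\otimes_\KK R \to R$ defined by $r\otimes s \mapsto \epsilon(r)\,s$, where $\epsilon\colon R\twoheadrightarrow \KK$ is the augmentation provided by the coefficient field. One checks that $\ker\phi = \m\otimes_\KK R$, and each generator $r\otimes 1 - 1\otimes r$ of the diagonal ideal $I$ maps to $\epsilon(r)-r\in\m$, so $\phi(I)=\m$ and hence $\phi(I^{n+1})=\m^{n+1}$. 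This yields an isomorphism $\cP^n_{R|\KK}/\m\,\cP^n_{R|\KK} \cong R/\m^{n+1}$ under which the universal differential $d^n(s)$ corresponds to $s \bmod \m^{n+1}$.

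Under these identifications, the reduction map of the first paragraph becomes
$$D^n_{R|\KK} \longrightarrow (R/\m^{n+1})^{*}, \qquad \delta \;\longmapsto\; \bigl(\bar s \mapsto \overline{\delta(s)}\bigr).$$
An element $\bar s \in R/\m^{n+1}$ lies in the common kernel of all these functionals precisely when $\delta(s)\in\m$ for every $\delta\in D^n_{R|\KK}$, that is, when $s \in \m\dif{n+1}$, by the very definition of the differential power. The induced pairing between $R/\m\dif{n+1}$ and the image of $D^n_{R|\KK}$ in $(R/\m^{n+1})^{*}$ is therefore non-degenerate, and since $R/\m^{n+1}$ is finite-dimensional over $\KK$, both sides of this pairing have the same dimension. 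Combined with the first paragraph, this gives $\fr(\cP^n_{R|\KK}) = \dim_\KK(R/\m\dif{n+1})$.

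The main obstacle I expect is the bookkeeping around the two $R$-module structures on $\cP^n_{R|\KK}$: the Hom-identification with $D^n_{R|\KK}$ and the quotient by $\m$ both use the left structure, whereas the auxiliary map $\phi$ is a ring homomorphism for the full $\KK$-algebra structure, and it is precisely this ring-theoretic nature that makes the identity $\phi(I^{n+1})=\m^{n+1}$ immediate. Once these two structures are kept distinct, the rest of the argument is a standard non-degenerate-pairing statement in finite-dimensional linear algebra.
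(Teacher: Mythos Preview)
The paper does not give its own proof of this proposition; it is quoted verbatim from \cite[Proposition~4.15]{BJNB}, so there is nothing in the present paper to compare your argument against directly. Your proof is correct. The pairing you construct in the last paragraph is precisely the one the paper records separately as Lemma~\ref{LemmaPairing}, so in effect you have reproved that lemma and then added the missing link to the free rank, namely the characterization $\fr(M)=\dim_\KK\operatorname{image}\bigl(\Hom_R(M,R)\to\Hom_\KK(M/\m M,\KK)\bigr)$ together with the identification $\cP^n_{R|\KK}/\m\,\cP^n_{R|\KK}\cong R/\m^{n+1}$ coming from the coefficient-field splitting. Both of these steps are clean and the bookkeeping about the left $R$-structure you flag at the end is handled correctly: the universal property $\Hom_R(\cP^n_{R|\KK},R)\cong D^n_{R|\KK}$ and the quotient by $\m$ do both use the first-factor action, and your ring homomorphism $\phi$ kills exactly $\m\otimes_\KK R$, which is that same left ideal. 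The only implicit hypothesis you are using is that $R/\m^{n+1}$ is finite-dimensional over $\KK$ (so that non-degeneracy of the pairing forces equality of dimensions); this holds whenever $R$ is Noetherian, in particular for the local rings of varieties considered in the paper.
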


We now present a perfect pairing between differential  operators and differential powers. This was implicitly introduced in previous work regarding convergence of differential signature \cite[Section 8]{BJNB}.

\begin{lemma}\label{LemmaPairing}
Let $(R,\m,\KK)$ be a  local $\KK$-algebra with $\KK$ as a coefficient field, and 
$\cJ_{R|\KK}=\{\delta \in D_{R|\KK} \;|\; \delta(R)\subseteq \m\}$.
There exists a non-degenerate $\KK$-bilinear function  
$$(\;\,\; ,\;\; ): D^{n-1}_{R|\KK}/  \cJ_{R|\KK}\cap D^{n-1}_{R|\KK} \times  R/\m\dif{n}    \to R/\m$$
defined by $( \overline{\delta },\overline{r} ) \mapsto \overline{\delta(r)}$.
\end{lemma}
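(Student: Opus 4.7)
The proof is essentially an unpacking of definitions; the content of the lemma lies in choosing the two quotients so that the obvious pairing $(\delta,r)\mapsto \overline{\delta(r)}$ becomes non-degenerate.

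First I would verify that the bilinear map is well-defined. On the first slot, if $\delta \in \cJ_{R|\KK}\cap D^{n-1}_{R|\KK}$, then by definition $\delta(R)\subseteq \m$, so $\overline{\delta(r)}=0$ in $R/\m$ for every $r\in R$. On the second slot, if $r\in \m\dif{n}$, then by the very definition of the differential powers, $\delta(r)\in \m$ for all $\delta\in D^{n-1}_{R|\KK}$, and again $\overline{\delta(r)}=0$. Bilinearity over $\KK$ is immediate from $\KK$-linearity of each $\delta$ and from the fact that $\m$ is a $\KK$-subspace of $R$.

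Next I would check non-degeneracy on each side, and here the two arguments are essentially dual. Suppose $\overline{\delta}\in D^{n-1}_{R|\KK}/(\cJ_{R|\KK}\cap D^{n-1}_{R|\KK})$ satisfies $(\overline{\delta},\overline{r})=0$ for every $\overline{r}\in R/\m\dif{n}$. Then $\delta(r)\in \m$ for every $r\in R$, so $\delta(R)\subseteq \m$, which means $\delta\in \cJ_{R|\KK}\cap D^{n-1}_{R|\KK}$ and hence $\overline{\delta}=0$. Conversely, suppose $\overline{r}\in R/\m\dif{n}$ satisfies $(\overline{\delta},\overline{r})=0$ for every $\overline{\delta}$. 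Then $\delta(r)\in \m$ for all $\delta\in D^{n-1}_{R|\KK}$, which is exactly the condition defining $\m\dif{n}$; hence $r\in \m\dif{n}$ and $\overline{r}=0$.

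There is no real obstacle in this argument: the whole point of the definitions of $\m\dif{n}$ and $\cJ_{R|\KK}$ is to make the natural evaluation pairing descend to the claimed quotients and become non-degenerate by tautology. The one thing I would want to be careful about when writing this up is keeping the two quotients clearly separated so that the two halves of the non-degeneracy argument are not confused with each other.
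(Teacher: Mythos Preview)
Your proof is correct and follows essentially the same approach as the paper: both verify well-definedness from the definitions of $\cJ_{R|\KK}$ and $\m\dif{n}$, and both derive non-degeneracy on each side as a tautological consequence of those same definitions. The only cosmetic difference is that the paper phrases the non-degeneracy argument in the direct form (``if $\overline{\delta}\neq 0$ then some pairing is nonzero'') whereas you use the contrapositive.
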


\begin{proof}
By  definition, $D^{n-1}_{R|\KK} \m\dif{n} \subseteq \m $ and $ \cJ_{R|\KK} R\subseteq \m$. Then, $(\;\,\; ,\;\; )$
 is a well defined function. Since $\KK$-linearity in each  entry is given by the definition of $(\;\,\; ,\;\; )$, we focus on non-degeneracy.
 
 Given any $\delta \in  D^{n-1}_{R|\KK} \setminus \cJ_{R|\KK}$, there exists $r\in R\setminus  \m\dif{n}$ such that
 $\delta (r)\not\in\m$. Then,  $( \overline{\delta },\overline{r} ) \neq 0$.
 
  Similarly, for any $r\in R\setminus  \m\dif{n}$, there exists  $\delta \in  D^{n-1}_{R|\KK} \setminus \cJ_{R|\KK}$  such that
 $\delta (r)\not\in\m$. Then,  $( \overline{\delta },\overline{r} ) \neq 0$.
\end{proof}

We now introduce concepts in prime characteristic that play a role in the proof of Theorem \ref{ThmNobileNormal}.

\begin{definition}
Let $(R,\m,\KK)$ be a  local $\KK$-algebra with $\KK$ as a coefficient field. Suppose that $\KK$ has prime characteristic $p$.
Suppose that $R$ is a domain.
\begin{itemize}
\item The ring of $p^e$-roots of $R$ is defined by 
$$
R^{1/p^e}=\{f^{1/p^{e}} \; |\; f\in R \}\subseteq \overline{\hbox{frac}(R)}.
$$
\item We say that $R$ is $F$-finite if $R^{1/p^e}$ is finitely generated as an $R$-module.
\item We say that $R$ is $F$-pure if the inclusion $R\hookrightarrow R^{1/p^e}$ splits.
\item We say that $R$ is strongly $F$-regular if for every $c\in R\setminus \{0\}$ there exists $e\in\ZZ^+$ such that the inclusion $Rc^{1/p^e}\hookrightarrow R^{1/p^e}$ splits.
\end{itemize}
We say that a variety $X$ satisfies one of these properties if it is satisfied for every local ring $\cO_{X,x}$ for every closed point $x\in X$.
\end{definition}

\begin{definition}
Let $(R,\m,\KK)$ be a  local $\KK$-algebra with $\KK$ as a coefficient field.
Suppose that $\KK$ has prime characteristic $p$, and that $R$ is a domain.
\begin{itemize}
\item  We say that an additive map $\phi:R\to R$ is $p^{-e}$-linear if $\phi(r^{p^e} f)=r\phi(f).$
\item The set of  all $p^{-e}$-linear maps is denoted by  $\cC^e_R$.
\item The set of Cartier operators is defined by $\cC_R=\bigcup_{e\in \NN} \cC^e_R$.
\end{itemize}
\end{definition}

\begin{remark}\label{RemCorresCartier}
There is a bijective correspondence between 
$$\Psi:\cC^e_R\to\Hom_R(R^{1/p^e},R)$$ 
 given by $\Psi(\phi)(r^{1/p^e})=\phi(r)$
for $\phi\in\cC^e_R$.
\end{remark}

The following characterization of differential operators in prime characteristic plays a crucial role to relate them with Cartier operators.

\begin{theorem}[{\cite{Yek}}]
Suppose that $\KK$ is a perfect field.
Let $(R,\m,\KK)$ be a  local $\KK$-domain with $\KK$ as a coefficient field.
Then,
$$
D_{R|\KK}=\bigcup_{e\in\NN}\Hom_{R^{p^e}}(R,R).
$$
\end{theorem}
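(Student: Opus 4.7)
The plan is to establish the two containments $D_{R|\KK} \subseteq \bigcup_{e\in\NN}\Hom_{R^{p^e}}(R,R)$ and $\bigcup_{e\in\NN}\Hom_{R^{p^e}}(R,R) \subseteq D_{R|\KK}$ by separate arguments: the first uses a commutator identity combined with Lucas' theorem on binomials mod $p$, and the second uses nilpotency of the kernel of the multiplication map $R\otimes_{R^{p^e}}R \to R$ coming from Frobenius.

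For the forward inclusion, let $\delta\in D^n_{R|\KK}$ and let $L_r$ denote left multiplication by $r\in R$, viewed as a $\KK$-linear operator on $R$. The standard identity $a^m b = \sum_{i=0}^{m} \binom{m}{i}(\operatorname{ad}_a)^i(b)\, a^{m-i}$ in an associative algebra yields
\[
[L_r^m,\delta] \;=\; \sum_{i=1}^{m} \binom{m}{i}\,(\operatorname{ad}_{L_r})^i(\delta)\, L_r^{m-i}.
\]
Choosing $m=p^e$ with $p^e>n$, Lucas' theorem forces every binomial coefficient $\binom{p^e}{i}$ with $1\le i\le p^e-1$ to vanish modulo $p$, and the surviving top term involves $(\operatorname{ad}_{L_r})^{p^e}(\delta)=0$ by the order hypothesis. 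Hence $\delta$ commutes with multiplication by $r^{p^e}$ for every $r\in R$, which means $\delta\in\Hom_{R^{p^e}}(R,R)$.

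For the reverse inclusion, let $\phi\in\Hom_{R^{p^e}}(R,R)$. By the Hom--tensor adjunction, $\phi$ corresponds to an $R$-linear map $\widetilde\phi\colon R\otimes_{R^{p^e}}R\to R$. Since $\KK$ is perfect, $\KK=\KK^{p^e}\subseteq R^{p^e}$, so base change gives a surjective $R$-algebra map $R\otimes_\KK R\twoheadrightarrow R\otimes_{R^{p^e}}R$ sending the kernel $I_\KK$ of multiplication onto the kernel $J$. The generators $r\otimes 1-1\otimes r$ of $J$ satisfy
\[
(r\otimes 1-1\otimes r)^{p^e} \;=\; r^{p^e}\otimes 1-1\otimes r^{p^e} \;=\; 0,
\]
because $r^{p^e}\in R^{p^e}$. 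Under the F-finiteness implicit in the setting, $J$ is finitely generated, so $J^{N+1}=0$ for some $N$. Consequently $\widetilde\phi$ factors through $(R\otimes_{R^{p^e}}R)/J^{N+1}$, which is a quotient of $(R\otimes_\KK R)/I_\KK^{N+1}=\mathcal{P}^N_{R|\KK}$. The induced $R$-linear map $\mathcal{P}^N_{R|\KK}\to R$ identifies $\phi$ with an element of $D^N_{R|\KK}\subseteq D_{R|\KK}$ under the natural isomorphism $\Hom_R(\mathcal{P}^N_{R|\KK},R)\cong D^N_{R|\KK}$.

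The main obstacle is controlling the nilpotency index in the reverse inclusion: while each generator of $J$ is individually $p^e$-nilpotent for free, concluding that $J^{N+1}=0$ uniformly requires finite generation of $J$, i.e., an F-finiteness hypothesis on $R$ over $R^{p^e}$ (typically guaranteed by $R$ being essentially of finite type over the perfect field $\KK$). One must also keep careful track of the left versus right $R$-module structures on $R\otimes_{R^{p^e}}R$ so that the operator reconstructed from $\widetilde\phi$ coincides with the original $\phi$ under the principal parts filtration.
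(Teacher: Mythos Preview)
The paper does not supply a proof of this theorem; it is quoted from Yekutieli's work \cite{Yek} and used as a black box in the proof of Theorem~\ref{ThmNobileNormal}. There is therefore no in-paper argument to compare against.

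Your proof is the standard one and is correct. Two minor comments. First, in the forward inclusion you may streamline slightly: once you know $(\operatorname{ad}_{L_r})^{n+1}(\delta)=0$, the identity $[L_r^{p^e},\delta]=(\operatorname{ad}_{L_r})^{p^e}(\delta)$ in characteristic $p$ already gives the conclusion without tracking the intermediate binomial terms, since $\operatorname{ad}_{L_r}^{p^e}=\operatorname{ad}_{L_{r}^{p^e}}$ is a consequence of the Frobenius on the associative algebra $\Hom_\KK(R,R)$. Second, your flagging of $F$-finiteness in the reverse inclusion is exactly right and is not idle: the hypotheses written in the paper (local $\KK$-domain with $\KK$ perfect as coefficient field) do not by themselves force $R$ to be module-finite over $R^{p^e}$, and without that your nilpotency argument for $J$ does not terminate. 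In the paper's actual setting $R=\mathcal{O}_{X,x}$ for a variety over an algebraically closed field, so $R$ is essentially of finite type over $\KK$ and hence $F$-finite; your proof then goes through with $N=k(p^e-1)$ where $k$ is the number of generators of $J$.
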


\begin{remark}\label{RemCorresLevel}
There is a bijective correspondence between 
$$\Psi:\Hom_{R^{p^e}}(R,R)\to\Hom_R(R^{1/p^e},R^{1/p^e})$$ 
 given by $\Psi(\phi)(r^{1/p^e})=(\phi(r))^{1/p^e}$
for $\phi\in \Hom_{R^{p^e}}(R,R)$.
\end{remark}

Now we are ready to prove that the Nash blow-up  does properly  modify normal varieties.

\begin{theorem}\label{ThmNobileNormal}
Let $X$ be a normal variety over $\KK$ of dimension $d$.
Suppose that $\KK$ has prime characteristic $p$.
If $\Nash_1(X)\cong X$, then $X$ is a non-singular variety. 
\end{theorem}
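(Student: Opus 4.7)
The plan is to fix an arbitrary closed point $x \in X$ and show the local ring $R := \Oxx$, which is a normal $\KK$-domain of dimension $d$, is regular; the conclusion for $X$ follows.

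First, Lemma~\ref{key} combined with Proposition~\ref{PropDifPowerFreeRank} gives $\dim_\KK R/\m\dif{2} = d+1$. The non-degenerate pairing of Lemma~\ref{LemmaPairing} at $n=2$, together with the natural splittings $D^1_{R|\KK} = R \oplus \Der_\KK(R,R)$ and $\cJ_{R|\KK}\cap D^1 = \m \oplus \{\partial:\partial(R)\subseteq\m\}$, restricts to a perfect pairing of $d$-dimensional $\KK$-spaces between a quotient of $\Der_\KK(R,R)$ and $\m/\m\dif{2}$. I extract dual bases: derivations $\partial_1,\dots,\partial_d\in\Der_\KK(R,R)$ and elements $f_1,\dots,f_d\in\m$ with $\partial_i(f_j)\equiv\delta_{ij}\pmod\m$. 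The matrix $(\partial_i(f_j))$ is the identity modulo $\m$, hence invertible over $R$, so after replacing the $\partial_i$ by an $R$-linear combination I may assume $\partial_i(f_j)=\delta_{ij}$ in $R$. It follows that $\Der_\KK(R,R)$ is free of rank $d$ with basis $\{\partial_i\}$ and that the $\partial_i$ commute pairwise (their commutators are derivations killing every $f_\ell$).

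Now I bring prime characteristic into play. Since $\KK$ is perfect, Yekutieli's theorem gives that every derivation is $R^p$-linear. For any multi-index $\alpha$ with $0\leq\alpha_i<p$, the factorial $\alpha! = \prod_i\alpha_i!$ is invertible in $\mathbb{F}_p$, and hence
\[
\phi_\alpha := \frac{1}{\alpha!}\,\partial_1^{\alpha_1}\cdots\partial_d^{\alpha_d}
\]
is a well-defined $R^p$-linear operator $R\to R$. A direct Leibniz computation, using $\partial_i(f_j)=\delta_{ij}$ and commutativity, yields the Kronecker relations $\phi_\alpha(f^\beta)\equiv\delta_{\alpha\beta}\pmod\m$ for all $0\leq\beta_i<p$. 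This perfect duality between the $p^d$ operators $\phi_\alpha$ and the $p^d$ monomials $f^\beta$ is the key characteristic-$p$ structure and is the analogue, for our $R$, of the standard setup for a regular local ring. I claim $R$ is a free $R^p$-module with basis $\{f^\beta\}_{0\leq\beta_i<p}$. Independence: a relation $\sum_\beta r_\beta^p f^\beta=0$ yields, via $\phi_\alpha$, that $r_\alpha^p\in\m$, hence $r_\alpha\in\m$ (as $\m$ is prime); iterating and invoking Krull's intersection theorem gives $r_\alpha=0$. For generation, pass to the $\m$-adic completion $\hat R$, which remains a normal local domain (since $R$ is excellent), and inductively subtract from $r\in R$ combinations $\sum\tilde r_\beta^p f^\beta$ chosen so that $\phi_\alpha$ of the remainder lies in successively higher powers of $\m$; completeness of $\hat R$ converts the approximations into an honest equality, and descent back to $R$ uses that $R$ is a domain (hence torsion-free over $R^p$). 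Once $R$ is a free $R^p$-module of rank $p^d$, Kunz's theorem \cite{Kunz} gives that $R$ is regular.

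The main obstacle is the generation step: converting the first-order Kronecker relations $\phi_\alpha(f^\beta)\equiv\delta_{\alpha\beta}\pmod\m$ into a genuine $R^p$-spanning of $R$. Normality is used essentially here, both to ensure the completion $\hat R$ is a normal domain in which the iterative construction converges, and to allow descent from $\hat R$ back to $R$. The other steps—extracting the dual data, linear independence, and the appeal to Kunz—are largely formal once the module-finite $R^p$-structure on $R$ is in hand.
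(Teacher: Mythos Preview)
Your setup through the construction of the $\phi_\alpha$ closely parallels the paper, and linear independence of the $f^\beta$ over $R^p$ is fine (in fact it follows immediately from invertibility of the matrix $(\phi_\alpha(f^\beta))$, without any iteration). The side claims that the $\partial_i$ commute and form a basis of $\Der_\KK(R,R)$ are not obviously true and are not needed; you can simply fix an order in the product defining $\phi_\alpha$.

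The genuine gap is the generation step. Your iterative scheme asks, at each stage, for elements $\tilde r_\beta\in R$ with $(\tilde r_\beta)^p$ matching $\phi_\beta$ of the current remainder modulo the next power of $\m$. But $p$-th powers of elements of $R$ lie in $\KK+\m^{[p]}\subseteq \KK+\m^p$: if $\tilde r\in\m$ then $\tilde r^p\in\m^p$, while if $\tilde r\notin\m$ then $\tilde r^p$ is a unit. Hence once the remainder has $\phi_\alpha(r_1)\in\m\setminus\m^2$ (which will typically happen after the very first subtraction), there is no choice of $\tilde r_\beta$ with $(\tilde r_\beta)^p\equiv\phi_\beta(r_1)\pmod{\m^2}$, and the recursion stalls. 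Equivalently, after inverting the matrix to get exact relations $\phi_\gamma(f^\beta)=\delta_{\gamma\beta}$ (which the paper does and you omit), expressing $r=\sum s_\beta^p f^\beta$ forces $\phi_\gamma(r)=s_\gamma^p\in R^p$ for every $\gamma$; there is no reason these $R^p$-linear operators should land in $R^p$. Normality of $\hat R$ does nothing to repair this, and the appeal to torsion-freeness for descent is beside the point.

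The paper resolves this by a different route: it transports the operators to $\varphi_\alpha\in\Hom_R(R^{1/p},R^{1/p})$, considers the map $\psi\colon\bigoplus_\alpha Re_\alpha\to R^{1/p}$ sending $e_\alpha\mapsto f^{\alpha/p}$, and checks it is an isomorphism after localizing at each height-one prime $Q$. There normality gives $R_Q$ regular, hence $F$-split, and a splitting $\sigma_Q\colon R_Q^{1/p}\to R_Q$ combined with the $\varphi_\alpha$ produces an explicit inverse $\rho_Q$ to $\psi_Q$. Since both source and target of $\psi$ are torsion-free and $(S_2)$, the codimension-one isomorphism upgrades to a global one, whence $R^{1/p}$ is free and Kunz applies. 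Normality is thus used twice---regularity in codimension one and the $(S_2)$ condition---in a way your argument does not capture.
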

\begin{proof}
Let $x$ be a point in $X$. Let $R=\cO_{X,x}$ and $\m$ be its maximal ideal.
By Lemma \ref{key}, we have that the module of principal parts $\cP^1_{R|\KK}$ has a free summand of rank $d+1$.
As a consequence, the module of K\"{a}hler differentials  $\Omega_{R|\KK}\cong R^d\oplus N$ for some torsion  module $N$. 
As a consequence,  $\dim_\KK \left( \m /\m^{\langle 2\rangle}\right)=d$. 
There exist elements $x_1,\ldots,x_d\in \m$ and derivations $\partial_1,\ldots,\partial_d$ such that $\partial_i (x_j)$ is a unit if and only if $i=j$ by Lemma \ref{LemmaPairing}.
Let $A=(a_{i,j})$ be the $d\times d$-matrix  whose $(i,j)$-entry is $\partial_i (x_j)$. We note that $A$ is an invertible matrix, as it is invertible modulo $\m$.  Let $C=(c_{i,j})$ be the inverse of $A$. Let $\delta_k=\sum^d_{i=1} c_{k,i} \partial_i$.
Then, $\delta_t( x_j)=\sum^d_{i=1} c_{t,i} \partial_i( x_j)=\sum^d_{i=1} c_{k,i} a_{i,j}$. Then, $\delta_t( x_j)=1$ if 
$t=j$ and zero otherwise.

Let $\cA=\{\alpha=(\alpha_1,\ldots,\alpha_d)\in\mathbb{N}^d \; | \; \alpha_i<p\;\; \forall i \}.$
Let $\frac{1}{\alpha !}\delta^\alpha=\frac{1}{\alpha_1 !\cdots \alpha_d !} \delta^{\alpha_1 }_1\cdots  \delta^{\alpha_d }_d$ for $\alpha\in\cA$. We point out that $\frac{1}{\alpha_i !}$ in $\KK$ is well defined, because $\alpha_i<p$ for every $i$.
Since $\delta_t$ is a derivation for every $t$, we have that
$\frac{1}{\alpha!}\delta (x^\beta)=1$ if $\alpha=\beta$ and $\frac{1}{\alpha!}\delta^\alpha (x^\beta)\in \m$ if $\alpha\neq \beta$ for every  $\alpha,\beta\in \cA$.

Let $\widetilde{A}=(\widetilde{a}_{\alpha,\beta})$ be the $p^d\times p^d$-matrix indexed by $\cA\times \cA$, whose $(\alpha,\beta)$-entry is $\frac{1}{\alpha!}\partial^\alpha (x^\beta)$. For this, we need to order $\cA,$ but the choice of order does not play a role in the rest of the proof. 
We note that $\widetilde{A}$ is an invertible matrix.  Let $\widetilde{C}=(\widetilde{c}_{\alpha,\beta})$ be the inverse of $\widetilde{A}$. 
Let $\phi_\gamma=\sum_{\alpha} \widetilde{c}_{\gamma,\alpha}  \frac{1}{\alpha !}\partial^\alpha$.
Then, $\phi_\gamma( x^\beta)=\sum_{\alpha} \widetilde{c}_{\gamma,\alpha} \frac{1}{\alpha!} \partial_\alpha( x^\beta)=\sum_\alpha \widetilde{c}_{\gamma,\alpha} \widetilde{a}_{\alpha,\beta}$. Then, $\phi_\gamma( x^\beta)=1$ if 
$\gamma=\beta$ and zero otherwise.

Since $\KK$ is algebraically closed, $\Der_{R|\KK}\subseteq \Hom_{R^{p}}(R,R)$.
Moreover,  $\frac{1}{\alpha!}\delta^\alpha\in  \Hom_{R^{p}}(R,R)$ for every $\alpha\in \cA$.
As a consequence, $\phi_\alpha \in  \Hom_{R^{p}}(R,R)$ for every $\alpha\in \cA$.
Let $\varphi_\alpha \in \Hom_{R}(R^{1/p},R^{1/p})$  defined by 
$\varphi_\alpha(f^{1/p})=\left( \phi_\alpha (f)\right)^{1/p}.$
Then, $\varphi_\alpha( x^{\beta/p})=1$ if 
$\alpha=\beta$ and zero otherwise.

We set $\psi :\oplus_{\alpha\in\cA} Re_\alpha\to R^{1/p}$ defined by $e_\alpha\mapsto x^{\alpha/p}$.
Let $Q\subseteq R$ be a prime ideal of $R$ of height $1$.
Let $\psi _Q$ be the map induced by $\psi$ by the localization at $Q$.
Since $R$ is normal, we have that $R_Q$ is a regular ring.
Then, $R^{1/p}_Q$ is a free $R_Q$-module.
Let $\sigma_Q: R^{1/p}_Q\to R_Q$ be a splitting of the inclusion $R_Q\hookrightarrow R^{1/p}_Q$. 
We consider the map $\rho_Q: R^{1/p}_Q\to \bigoplus_{\alpha\in \cA} R_Qe_{\alpha}$ defined by 
$$
\rho_Q \left( \frac{f^{1/p}}{s} \right)=\bigoplus_{\alpha\in \cA}  \sigma_Q \left( \frac{\varphi_\alpha (f^{1/p}) }{s}\right) e_\alpha.
$$
Then, we have that $\rho_Q$ is surjective because $\rho_Q(x^{\alpha/p})=e_\alpha$.
Since  $R^{1/p}_Q$ is  a free $R_Q$-module  of rank $p^{d}$, we conclude that  $\rho_Q$ is an isomorphism.
Furthermore, $\rho_Q\circ \psi_Q$ is the identity on $\oplus_{\alpha\in\cA} R_Q e_\alpha$. As a consequence, $\psi_Q$ is an isomorphism. Since $R$ is normal, both $\oplus_{\alpha\in\cA} Re_\alpha$ and $R^{1/p}$ are torsion-free and  $(S_2)$. Then, $\psi$ is an isomorphism \cite[Lemma 15.23.14]{sp}. Hence,
$R^{1/p}$ is a free $R$-module, and so, $R$ is regular by Kunz's Theorem \cite{Kunz}.
\end{proof}

We now focus on $F$-pure varieties, that is, varieties whose local ring $\cO_{X,x}$ is $F$-pure for every closed point $x\in X$. For this, we need to recall  two criterions. One for $D$-simplicity and another for  strong $F$-regularity.

\begin{proposition}[{\cite[Corollary 3.16]{BJNB}}]\label{PropCoreDsimple}
Let $(R,\m,\KK)$ be a  local $\KK$-algebra with $\KK$ as a coefficient field.
Then, $R$ is simple as a $D_{R| \KK}$-module if and only if its differential core is zero.
\end{proposition}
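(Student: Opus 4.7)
The plan is to prove both implications by verifying that $\p_{\diff}(R)$ is itself the largest proper $D_{R|\KK}$-stable ideal of $R$, so that $D$-simplicity of $R$ becomes equivalent to the vanishing of $\p_{\diff}(R)$.

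First I would check that $\p_{\diff}(R)$ is a proper $D_{R|\KK}$-stable ideal. The ideal property follows from the fact that multiplication by $r\in R$ lies in $D^{0}_{R|\KK}$: for any $\delta\in D^{n-1}_{R|\KK}$ the composition $\delta\circ r$ is again in $D^{n-1}_{R|\KK}$ with $(\delta\circ r)(f)=\delta(rf)$, so each $\m\dif{n}$ is closed under multiplication by $R$. Taking $n=1$ gives $\m\dif{1}=\m$, hence $\p_{\diff}(R)\subseteq\m\subsetneq R$. For $D$-stability, if $D\in D^{k}_{R|\KK}$ and $f\in\p_{\diff}(R)$, then for every $n\geq 1$ and every $\delta\in D^{n-1}_{R|\KK}$ the composition $\delta\circ D$ lies in $D^{n+k-1}_{R|\KK}$, and it sends $f$ into $\m$ because $f\in\m\dif{n+k}$; intersecting over $n$ yields $D(f)\in\p_{\diff}(R)$. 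Once this is established, if $R$ is $D_{R|\KK}$-simple then the only $D$-stable ideals of $R$ are $0$ and $R$, so the proper ideal $\p_{\diff}(R)$ must vanish.

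For the converse, I would assume $\p_{\diff}(R)=0$, take a non-zero $D_{R|\KK}$-stable ideal $I\subseteq R$, and pick $0\neq f\in I$. The hypothesis provides some $n$ with $f\notin\m\dif{n}$, so by the very definition of the differential powers there exists $\delta\in D^{n-1}_{R|\KK}$ with $\delta(f)\notin\m$. Since $R$ is local, $\delta(f)$ is a unit, and the $D$-stability of $I$ places this unit in $I$, forcing $I=R$. Hence $R$ is $D_{R|\KK}$-simple.

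No step in this plan poses a genuine obstacle; the essential bookkeeping is to keep careful track of how the orders of differential operators add under composition, as this is precisely what converts the non-membership $f\notin\m\dif{n}$ into a differential operator that promotes a non-zero element of a $D$-stable ideal to a unit.
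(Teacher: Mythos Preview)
Your argument is correct and is exactly the standard proof: you show that $\p_{\diff}(R)$ is a proper $D_{R|\KK}$-submodule of $R$ (giving one direction), and for the converse you use that any $0\neq f$ outside $\p_{\diff}(R)$ can be sent to a unit by some differential operator. The paper does not supply its own proof of this proposition---it is quoted from \cite[Corollary~3.16]{BJNB}---so there is nothing further to compare; your write-up matches the argument in the cited source.
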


\begin{theorem}[{\cite[Theorem 2.2]{DModFSplit}}]\label{ThmSmith}
Let $(R,\m,\KK)$ be a  local $\KK$-algebra with $\KK$ as a coefficient field.
Let $R$ be an $F$-pure $F$-finite ring.
Then, $R$ is $D_{R|\KK}$-simple if and only if $R$ is strongly $F$-regular.
\end{theorem}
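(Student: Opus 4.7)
The plan is to exploit Yekutieli's theorem $D_{R|\KK}=\bigcup_{e\in\NN}\Hom_{R^{p^e}}(R,R)$ in tandem with the correspondences in Remarks \ref{RemCorresCartier} and \ref{RemCorresLevel} to pass freely between differential operators, elements of $\Hom_R(R^{1/p^e},R^{1/p^e})$, and Cartier-type splittings in $\Hom_R(R^{1/p^e},R)$. The discrepancy between the second and third of these $\Hom$-spaces is bridged precisely by an $R$-linear splitting of $R\hookrightarrow R^{1/p^e}$, which is exactly what the $F$-purity hypothesis (combined with $F$-finiteness) provides.

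For the direction $(\Leftarrow)$, I would assume $R$ is strongly $F$-regular and fix $c\in R\setminus\{0\}$. Since $R$ is a domain, $Rc^{1/p^e}\subseteq R^{1/p^e}$ is free of rank one over $R$, so a splitting of $Rc^{1/p^e}\hookrightarrow R^{1/p^e}$ amounts to a map $\phi\in\Hom_R(R^{1/p^e},R)$ with $\phi(c^{1/p^e})=1$. Composing $\phi$ with $R\hookrightarrow R^{1/p^e}$ yields $\widetilde{\phi}\in\Hom_R(R^{1/p^e},R^{1/p^e})$ with $\widetilde{\phi}(c^{1/p^e})=1$, and Remark \ref{RemCorresLevel} together with Yekutieli's theorem then produces $\delta\in D_{R|\KK}$ satisfying $\delta(c)=1$. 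Since $c$ was arbitrary, every nonzero $D_{R|\KK}$-stable ideal of $R$ contains $1$, so $R$ is $D_{R|\KK}$-simple.

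For the direction $(\Rightarrow)$, I would assume $R$ is $F$-pure, $F$-finite, and $D_{R|\KK}$-simple, and fix $c\in R\setminus\{0\}$. By $D$-simplicity the submodule $D_{R|\KK}\cdot c$ equals $R$, so there exists $\delta\in D_{R|\KK}$ with $\delta(c)=1$, and Yekutieli's theorem gives some $e\in\NN$ with $\delta\in\Hom_{R^{p^e}}(R,R)$. Remark \ref{RemCorresLevel} translates $\delta$ into $\varphi\in\Hom_R(R^{1/p^e},R^{1/p^e})$ with $\varphi(c^{1/p^e})=1$. Since $R$ is $F$-finite and $F$-pure, the inclusion $R\hookrightarrow R^{1/p^e}$ admits an $R$-linear splitting $\pi\in\Hom_R(R^{1/p^e},R)$. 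Then $\pi\circ\varphi\in\Hom_R(R^{1/p^e},R)$ satisfies $(\pi\circ\varphi)(c^{1/p^e})=\pi(1)=1$, which by Remark \ref{RemCorresCartier} yields the Cartier operator whose existence is equivalent to the desired splitting of $Rc^{1/p^e}\hookrightarrow R^{1/p^e}$; thus $R$ is strongly $F$-regular.

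The main obstacle is organizational: correctly tracking which $\Hom$-space each map belongs to, and checking that compositions such as $\pi\circ\varphi$ remain $R$-linear. Once the bookkeeping is in place both implications collapse to a single composition, and the role of each hypothesis becomes transparent: $F$-purity is used only in $(\Rightarrow)$ (to close the gap between $\Hom_R(R^{1/p^e},R^{1/p^e})$ and $\Hom_R(R^{1/p^e},R)$), while $F$-finiteness is used to guarantee that $F$-purity upgrades to a genuine $R$-linear splitting.
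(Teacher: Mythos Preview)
The paper does not supply its own proof of this statement: Theorem~\ref{ThmSmith} is quoted from \cite[Theorem~2.2]{DModFSplit} and used as a black box in the proof of Theorem~\ref{ThmStReg}. So there is no ``paper's own proof'' to compare against.

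That said, your argument is correct and is essentially the original proof due to Smith. The two directions are exactly the expected ones: strong $F$-regularity gives, for each nonzero $c$, a map in $\Hom_R(R^{1/p^e},R)$ sending $c^{1/p^e}\mapsto 1$, which (after including $R$ back into $R^{1/p^e}$ and applying the correspondence of Remark~\ref{RemCorresLevel}) yields a differential operator sending $c\mapsto 1$; conversely, $D$-simplicity produces such an operator, and postcomposing the corresponding element of $\Hom_R(R^{1/p^e},R^{1/p^e})$ with a Frobenius splitting lands you in $\Hom_R(R^{1/p^e},R)$ with the required property. The bookkeeping you flag is indeed the only thing to watch, and you have it right.

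One small remark on the closing paragraph: in this paper's conventions (see the definitions preceding Remark~\ref{RemCorresCartier}), $F$-pure is \emph{defined} to mean that $R\hookrightarrow R^{1/p^e}$ splits, and the standing hypothesis is that $R$ is a domain. So within this paper you do not need $F$-finiteness to upgrade purity to an honest splitting; the splitting is already part of the hypothesis. Your comment is accurate for the usual definition of $F$-purity, but is superfluous here.
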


We now present another of our main results. Even though we are not able to show that $\Nash_n(X)\cong X$
implies smoothness, this condition implies strong $F$-regularity. In particular, in this case $X$ is Cohen-Macaulay and normal.

\begin{theorem}\label{ThmStReg}
Let $X$ be an $F$-pure variety. If $\Nash_n(X)\cong X$, then $X$ is a strongly $F$-regular variety. 
\end{theorem}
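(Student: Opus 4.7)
The plan is to show that for every closed point $x \in X$, the local ring $R = \cO_{X,x}$ is $D_{R|\KK}$-simple, and then conclude strong $F$-regularity via Theorem~\ref{ThmSmith}, using that $R$ is $F$-pure by hypothesis and $F$-finite because $\KK$ is perfect. By Proposition~\ref{PropCoreDsimple}, $D_{R|\KK}$-simplicity is equivalent to the vanishing of the differential core $\p_{\diff}(R) = \bigcap_{m\in\ZZ^+} \m\dif{m}$, so the task reduces to proving $\p_{\diff}(R) = 0$.

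Combining Lemma~\ref{key} with Proposition~\ref{PropDifPowerFreeRank} yields $\dim_\KK R/\m\dif{n+1} = \binom{n+d}{d}$, where $d=\dim R$, the maximum possible value of this dimension. By the non-degenerate pairing of Lemma~\ref{LemmaPairing}, one can choose a $\KK$-basis $\{x_\alpha\}_{\alpha\in\cA}$ of $R/\m\dif{n+1}$ together with operators $\{\partial_\alpha\}_{\alpha\in\cA}\subseteq D^{n}_{R|\KK}$ whose matrix $(\partial_\alpha(x_\beta))$ reduces modulo $\m$ to the identity. Inverting this matrix over $R$ and replacing the $\partial_\alpha$'s by the corresponding $R$-linear combinations, we may arrange $\partial_\alpha(x_\beta) = \delta_{\alpha,\beta}$ exactly. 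For any $c\notin\m\dif{n+1}$, writing $c\equiv\sum_\alpha u_\alpha x_\alpha \pmod{\m\dif{n+1}}$ with some $u_\alpha$ a unit of $R$, the operator $u_\alpha^{-1}\partial_\alpha$ sends $c$ to a unit modulo $\m$, and in particular $c\notin\p_{\diff}(R)$. Consequently $\p_{\diff}(R) \subseteq \m\dif{n+1}$.

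The delicate remaining step is to eliminate nonzero elements of $\p_{\diff}(R) \cap \m\dif{n+1}$, and this is precisely where the $F$-purity hypothesis enters. The plan is to invoke Yekutieli's identification $D_{R|\KK} = \bigcup_e \Hom_{R^{p^e}}(R,R)$ and, via Remark~\ref{RemCorresLevel}, realize the operators $\partial_\alpha$ as $R$-module endomorphisms $\varphi_\alpha\in\Hom_R(R^{1/p^e},R^{1/p^e})$ for any $e$ with $p^e>n$. Composing with iterated $F$-pure splittings $\sigma\in\Hom_R(R^{1/p^e},R)$ (which exist with $\sigma(1)=1$ by $F$-purity) yields $R$-linear maps $R^{1/p^e}\to R$. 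Combined with Krull's intersection theorem, $\bigcap_e\m^{p^e}=0$, the plan is to show that any hypothetical nonzero $c\in\p_{\diff}(R)$ is detectable by such a composition at a sufficiently large Frobenius level $e$, contradicting the defining property of $\p_{\diff}(R)$. Once $\p_{\diff}(R)=0$ is established, Theorem~\ref{ThmSmith} completes the proof.

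The main obstacle is the final step: the hypothesis $\Nash_n(X)\cong X$ produces a dual system $(x_\alpha,\partial_\alpha)$ only at the single level $n$, directly controlling $\m\dif{n+1}$, whereas $\p_{\diff}(R)$ is an infinite intersection. The $F$-purity assumption is essential for propagating the information from level $n$ to arbitrarily high Frobenius levels through the splittings $\sigma$. Without it, the cusp $\V(x^3-y^2)$ in characteristic $2$ gives a singular variety with $\Nash_n(X)\cong X$ for every $n\geq 1$, showing that the conclusion fails in its absence.
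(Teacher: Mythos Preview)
Your proposal has a genuine gap at the crucial step. You correctly reduce the problem to showing that the differential core $\p_{\diff}(R)=0$, but you never actually prove this. The second paragraph builds a dual system $(x_\alpha,\partial_\alpha)$ only to conclude $\p_{\diff}(R)\subseteq \m\dif{n+1}$, which is trivially true by definition of $\p_{\diff}(R)=\bigcap_m \m\dif{m}$; that entire construction is therefore idle. The third paragraph then openly labels the remaining step ``delicate'' and ``the main obstacle,'' and offers only a vague plan: compose the $\varphi_\alpha$ with $F$-pure splittings $\sigma$ and invoke Krull's intersection theorem to ``detect'' a nonzero $c\in\p_{\diff}(R)$. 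But $\sigma\circ\varphi_\alpha$ lands in $\Hom_R(R^{1/p^e},R)$, i.e.\ among Cartier operators, not differential operators on $R$; it is not explained how such maps would witness $c\notin\m\dif{m}$ for large $m$, nor how the single level-$n$ information propagates. As written, this is an outline of hopes, not a proof.

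The paper's argument is entirely different and avoids this difficulty. It does \emph{not} use $F$-purity to show $\p_{\diff}(R)=0$; that step is characteristic-free. One passes to the quotient $\overline{R}=R/\p$ with $\p=\p_{\diff}(R)$. Since $\p$ is a $D_{R|\KK}$-ideal there is a filtered map $D_{R|\KK}\to D_{\overline{R}|\KK}$, giving $\overline{\m}\dif{n}\subseteq \m\dif{n}\overline{R}$. Because $\p\subseteq \m\dif{n}$, one obtains
\[
\binom{n+d}{d}=\dim_\KK(R/\m\dif{n})=\dim_\KK(\overline{R}/\m\dif{n}\overline{R})\le \dim_\KK(\overline{R}/\overline{\m}\dif{n})\le \binom{n+c}{c},
\]
where $c=\dim\overline{R}$ and the last inequality is again Proposition~\ref{PropDifPowerFreeRank}. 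Hence $c=d$. Since $X$ is irreducible, $R$ is a domain, so any nonzero $\p$ would contain a parameter and force $c<d$; thus $\p=0$. Only then is $F$-purity invoked, via Theorem~\ref{ThmSmith}, to pass from $D$-simplicity to strong $F$-regularity. The dimension-drop argument on $R/\p$ is the idea you are missing.
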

\begin{proof}
Let $x$ be a closed point in $X$, and $d=\dim(X)$. Let $R=\Ox_{X,x}$ and $\m$ be its maximal ideal. 
By Lemma \ref{key}, we have that the module of principal parts $\cP^n_{R|\KK}$ has a free summand of rank $\binom{n+d }{d}$.
Then, by Lemma \ref{key}, $\dim_\KK(R/\m^{\langle n\rangle })=\binom{n+d }{ d}$. Let $\p$ denote the differential core of $R$. 


Let $\overline{R}=R/\p$ and $\overline{\m}=\m \overline{R}$.
Since $\p$ is a $D_{R|\KK}$-ideal, we have a natural map of filtered rings $D_{R|\KK}\to D_{\overline{R}| \KK}$. Then,
$\overline{\m}^{\langle n\rangle }\subseteq \m^{\langle n\rangle }\overline{R}$.
Since $\p=\bigcap_{t\in\NN} \m^{\langle t\rangle }$, we have that
$$
\binom{n+d }{ d}=\dim_\KK (R/\m^{\langle n\rangle })=\dim_\KK(R/\m^{\langle n\rangle}+\mathfrak{p})
\leq \dim_\KK (R/\overline{\m}^{\langle n\rangle })\leq \binom{n+c }{ c},
$$
where $c=\dim R/\p\leq d$  by Proposition \ref{PropDifPowerFreeRank}. Then, $c=d$.
We note that $\p$ contains every minimal prime. 
We conclude that $\p=0$; otherwise,  $\p$  contains a parameter and $c<d$.  Hence, $R$ is strongly $F$-regular 
by Proposition \ref{PropCoreDsimple} and Theorem \ref{ThmSmith}.
\end{proof}

We now present an analogous to Nobile's Theorem for $F$-pure rings. It is worth mentioning that $F$-pure rings might not be normal. 

\begin{corollary}\label{CorNobileFpure}
Let $X$ be an $F$-pure  variety. If $\Nash_1(X)\cong X$, then $X$ is a non-singular variety. 
\end{corollary}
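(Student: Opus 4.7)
The plan is to combine the two main theorems proved earlier in this section, namely Theorem \ref{ThmStReg} and Theorem \ref{ThmNobileNormal}, with the standard fact that strongly $F$-regular rings are normal. The argument should be essentially a one-line deduction, so the role of the proof is really just to assemble the pieces in the right order.

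First, I would observe that the hypotheses of Theorem \ref{ThmStReg} hold in the case $n=1$: the variety $X$ is assumed $F$-pure and we are assuming $\Nash_1(X)\cong X$. Applying that theorem, we conclude that $X$ is strongly $F$-regular. Next, I would invoke the standard fact that a strongly $F$-regular local ring is normal (and in fact Cohen--Macaulay); this was already highlighted in the remark following the statement of Theorem \ref{nob f pure} in the introduction, and follows because, locally, $R$ embeds as a direct summand of the regular ring $R^{1/p^e}$ for suitable splittings, which forces normality. Thus $X$ is normal and irreducible.

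With normality in hand, we are now in position to apply Theorem \ref{ThmNobileNormal} directly: since $X$ is normal and $\Nash_1(X)\cong X$, that theorem gives us that $X$ is non-singular, which is precisely the conclusion of the corollary. There is no genuine obstacle in this argument; the substantive content is all contained in Theorem \ref{ThmStReg} (which handles arbitrary $n$ under the $F$-purity hypothesis) and Theorem \ref{ThmNobileNormal} (which handles $n=1$ under the normality hypothesis). The corollary is really just the observation that when $n=1$, the conclusion of the former provides enough regularity to feed into the hypothesis of the latter, thereby upgrading ``strongly $F$-regular'' to ``non-singular.''
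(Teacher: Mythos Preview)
Your proposal is correct and follows exactly the same route as the paper: apply Theorem \ref{ThmStReg} with $n=1$ to get strong $F$-regularity, note that this implies normality, and then invoke Theorem \ref{ThmNobileNormal}. There is nothing to add.
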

\begin{proof}
By Theorem \ref{ThmStReg}. $X$ is a strongly $F$-regular variety. Then, $X$ is a normal variety.
Hence, $X$ is  nonsingular by Theorem \ref{ThmNobileNormal}.
\end{proof}

\section{Higher-order versions of Nobile's Theorem}

In this section we study a higher version of Nobile's Theorem for quotient varieties. We also revisit a known results for hypersurfaces \cite[Theorem 4.13]{DuarteHyp} concerning the analog of Nobile's Theorem for higher Nash blow-ups in prime characteristic.

\subsection{Quotient varieties}

Let $G$ be a  linearly reductive algebraic group acting algebraically on $\Spec(R)$, where $R$ is a polynomial rings over $\KK$. The algebraic quotient $X\sslash G$ is defined by identifying two points of X whenever their orbit closures have non-empty intersection. 
This is an  affine algebraic variety whose coordinate ring is $R^G$.
If all the orbits are closed, then $X\sslash G$ is the usual orbit space and it is called a quotient variety.
If $|G|$ has a multiplicative inverse  in $\KK$, this situation happens. In this subsection, we present a higher version of  Nobile's Theorem in this case.

\begin{theorem}\label{ThmNobileQuot}
Let $G$ be a finite non-trivial  group such that $|G|$ has a multiplicative inverse in $K$. Let $G$ act linearly on a polynomial ring $R=\KK[x_1,\ldots, x_d]$. 
Suppose that $G\setminus\{e\}$ contains no elements that fix a hyperplane in the space of one-forms $[R]_1$. 
Let $X=\Spec(R^G)$.
Then, $\Nash_n(X)\not\cong X$.
\end{theorem}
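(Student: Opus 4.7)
The plan is to derive a contradiction by comparing $\dim_\KK R^G/\m^{\langle n+1\rangle}$ (where $\m = (R^G)_{>0}$ is the irrelevant maximal ideal, corresponding to the origin $x_0 \in X$) with the truncated Hilbert function of $R^G$. Assume $\Nash_n(X)\cong X$. Applying Lemma~\ref{key} at $x_0$ and Proposition~\ref{PropDifPowerFreeRank} to the local ring $R^G_\m$, we obtain
\[
\dim_\KK R^G/\m^{\langle n+1\rangle} \;=\; \binom{n+d}{d}.
\]
The goal is then to show that the left-hand side is \emph{strictly} smaller than $\binom{n+d}{d}$.

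The key step is to establish the equality $\m^{\langle n+1\rangle}_{R^G} = R^G\cap\m_R^{n+1}$, where $\m_R=(x_1,\ldots,x_d)$ is the irrelevant ideal of $R$. The Reynolds operator $\rho\colon R\to R^G$ is $R^G$-linear and preserves the grading; in particular, $\rho$ preserves the constant term of every element. A direct commutator calculation using $R^G$-linearity of $\rho$ shows that for any $\tilde\delta \in D^n_{R|\KK}$ the map $\rho\circ\tilde\delta|_{R^G}$ lies in $D^n_{R^G|\KK}$. For the inclusion $\m^{\langle n+1\rangle}_{R^G}\subseteq R^G\cap\m_R^{n+1}$: given $f\in\m^{\langle n+1\rangle}$ and $\tilde\delta\in D^n_{R|\KK}$, we have $\rho(\tilde\delta(f))=(\rho\tilde\delta|_{R^G})(f)\in\m$, and since $\rho$ preserves constant terms this forces $\tilde\delta(f)\in\m_R$; as $R$ is regular, $f\in\m_R^{\langle n+1\rangle}=\m_R^{n+1}$. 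For the reverse inclusion, given $f\in R^G\cap\m_R^{n+1}$ and $\delta\in D^n_{R^G|\KK}$, we invoke the classical surjectivity of the restriction map $(D^n_{R|\KK})^G\to D^n_{R^G|\KK}$ (valid whenever $|G|$ is invertible in $\KK$, since $R^G$ is then an $R^G$-direct summand of $R$). Lifting $\delta$ to a $G$-invariant $\tilde\delta$, the standard property $\tilde\delta(\m_R^{n+1})\subseteq\m_R$ together with $G$-invariance of $\tilde\delta$ yields $\delta(f)=\tilde\delta(f)\in R^G\cap\m_R=\m$.

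Granting the equality, since $R$ is generated in degree one we have $\m_R^{n+1}=R_{\geq n+1}$ and thus $R^G\cap\m_R^{n+1}=\bigoplus_{k\geq n+1}R^G_k$, giving
\[
\binom{n+d}{d} \;=\; \dim_\KK R^G/\m^{\langle n+1\rangle} \;=\; \sum_{k=0}^n \dim_\KK R^G_k.
\]
The no-pseudo-reflection hypothesis is now used in two ways: first, any non-identity element acting as the identity on $V=[R]_1$ would fix every hyperplane, so $G$ acts faithfully on $V$; second, each non-identity $g\in G$ has fixed subspace $V^g$ of codimension $\geq 2$ in $V$, so $V^G\subseteq V^g$ has $\dim V^G\leq d-2$. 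In particular $\dim_\KK R^G_1=\dim_\KK V^G<d=\dim_\KK R_1$, while $\dim_\KK R^G_k\leq\dim_\KK R_k$ for every $k$. Summing, $\sum_{k=0}^n\dim_\KK R^G_k<\sum_{k=0}^n\dim_\KK R_k=\binom{n+d}{d}$, which is the desired contradiction.

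The main obstacle is verifying the surjectivity of $(D^n_{R|\KK})^G\to D^n_{R^G|\KK}$ in the prime-characteristic setting. This is the lifting property that lets us transport the standard fact $D^n\cdot\m_R^{n+1}\subseteq\m_R$ from $R$ down to $R^G$; under the hypothesis that $|G|$ is a unit in $\KK$, it follows from classical invariant-theoretic arguments (ultimately built from the Reynolds splitting and the Morita-type identification of $D_{R^G|\KK}$ inside $D_{R|\KK}\#G$), and its proof is where the assumption on $|G|$ enters in an essential way.
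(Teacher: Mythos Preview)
Your proof follows the same route as the paper's: both establish $\eta^{\langle n+1\rangle}=R^G\cap\m_R^{\,n+1}$ (with $\eta$ the irrelevant ideal of $R^G$), read off $\dim_\KK(R^G/\eta^{\langle n+1\rangle})$ as the truncated Hilbert function of $R^G$, and use $\dim_\KK R^G_1<d$ to obtain the strict inequality that contradicts Lemma~\ref{key} via Proposition~\ref{PropDifPowerFreeRank}. The paper obtains the key equality by citing \cite[Propositions~6.4 and~6.14]{BJNB}, where the no-pseudo-reflection hypothesis enters as unramifiedness in codimension one (yielding ``order-differential extensibility''); you instead sketch the two inclusions directly using the Reynolds operator for one direction and a lifting of differential operators for the other.

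One small caveat: you attribute the order-preserving surjection $(D^n_{R|\KK})^G\to D^n_{R^G|\KK}$ to the direct-summand condition alone, but preserving the \emph{order} in this lift is exactly what the paper's cited result extracts from the no-pseudo-reflection hypothesis. Since that hypothesis is available here your argument goes through, but be aware that it is doing work in your lifting step as well, not only in the degree-one Hilbert comparison.
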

\begin{proof}
The ramification locus of a finite group action corresponds to the union of fixed spaces of elements of $G$. Consequently, the assumption that no element fixes a hyper-plane ensures that the extension is unramified in codimension one. The inclusion is
order-differentially extensible \cite[Proposition 6.4]{BJNB}.
Let $\m$ be the maximal homogeneous ideal of $R$, and $\eta=\m\cap R^G$.
We note that $\dim R^G_\m=\dim R^G=d$.
Under these conditions, we have that $\eta^{\langle n\rangle }=\m^n\cap R^G$ 
\cite[Proposition 6.14]{BJNB}, 
Then, 
\begin{equation}\label{EqG1}
\dim_\KK  (\eta^{\langle j-1\rangle }_\eta/\eta^{\langle j\rangle }_\eta) \leq  \dim_\KK ( \m^{j-1}_\m/\m^j_\m).
\end{equation}
By our assumptions on $G$, we have that $R\neq R^G$. Then, 
\begin{equation}\label{EqG1}
\dim_\KK (\eta^{\langle 1\rangle }_\eta/\eta^{\langle 2\rangle }_\eta) <\dim_\KK (\m_\m/\m^ 2_\m)=d.
\end{equation}
We conclude that 
$$
\dim_\KK (R/\eta^{\langle n\rangle }_\eta)=\sum^n_{j=1} \dim_\KK (\eta^{ \langle j-1\rangle}_\eta/\eta^{\langle j\rangle}_\eta)<\sum^n_{j=1} \dim_\KK (\m^{j-1}_\m/\m^j_\m)=\binom{n+d}{d}.
$$
Then, the free rank of  $\cP_{R^G_\eta |\KK}$ is strictly smaller than $\binom{n+d}{d}$ by Proposition \ref{PropDifPowerFreeRank}. As a consequence, $\Nash_n(X)\not\cong X$ by Lemma \ref{key}.
\end{proof}

\subsection{Hypersurfaces}

Now we study the case of hypersurfaces. We note that the proof we present is characteristic free. 

\begin{theorem}\label{ThmNobileHyp}
Let $X$ be a normal hypersurface. If $\Nash_n(X)\cong X$, then $X$ is a non-singular variety.
\end{theorem}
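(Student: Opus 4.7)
I would argue by contradiction. Assume $X$ is a normal hypersurface with $\Nash_n(X)\cong X$ that is singular at some closed point $x$. Let $R=\cO_{X,x}$ with maximal ideal $\m$ and put $d=\dim X$. Since $X$ is a hypersurface, we can present $R=S/(f)$ where $(S,\n)$ is a regular local $\KK$-algebra of dimension $d+1$ and $f\in\n$. If $x$ is a singular point then $f\in\n^2$, and normality forces $(R_1)$: the singular locus, cut out by the Jacobian ideal $J\subseteq R$, has codimension at least $2$ in $\Spec R$. By Lemma \ref{key} together with Proposition \ref{PropDifPowerFreeRank}, the isomorphism $\Nash_n(X)\cong X$ is equivalent to the sharp equality
$$\dim_\KK\bigl(R/\m^{\langle n+1\rangle}\bigr)=\binom{n+d}{d}.$$

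The plan is to show that this equality is incompatible with $f\in\n^2$ under the normality hypothesis. The description of $\m^{\langle n+1\rangle}$ that I would use lifts differential operators from $S$: a $\KK$-linear operator $\delta\in D^n_{S|\KK}$ descends to $R$ precisely when $\delta((f))\subseteq (f)$. Because $S$ is regular, $D_{S|\KK}$ is freely generated (modulo $\n$) by divided powers of a regular system of parameters, so the descent condition becomes a linear condition involving $f$ and its higher derivatives. Using the pairing of Lemma \ref{LemmaPairing} between descended operators and $R/\m^{\langle n+1\rangle}$, the equality above becomes a sharp counting statement for the space of operators of $S$ which preserve $(f)$ modulo $\m$ acting on suitable monomials representing a cotangent-type basis.

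The main step is then to exhibit, whenever $f\in\n^2$, an element in $\m^{\langle n+1\rangle}$ whose lift to $S$ lies outside $\n^{n+1}+(f)$; this extra element strictly enlarges $\m^{\langle n+1\rangle}$ and therefore forces $\dim_\KK(R/\m^{\langle n+1\rangle})>\binom{n+d}{d}$, contradicting the equality and hence the assumption $\Nash_n(X)\cong X$. The extra element should come from a suitable combination of partial derivatives of $f$; the reason it cannot be absorbed is that every low-order differential operator on $R$ comes from an operator on $S$ annihilating $f$ modulo $(f)$, and when $f\in\n^2$ such operators miss a codimension-one piece of the cotangent space.

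The hard part—and the place where normality is essential—will be arguing that this extra element actually gives a genuinely new class in $R/\m^{\langle n+1\rangle}$, rather than being absorbed by torsion contributions supported on the singular locus. The role of $(R_1)$ is precisely to guarantee that the torsion of $\cP^n_{R|\KK}$ is supported in codimension at least two, so that a codimension-one cancellation in the counting above is ruled out; without normality this argument collapses, as the classical cusp example of Nobile and Yasuda shows. Making this bookkeeping quantitative for arbitrary $n$ and in arbitrary characteristic, without appealing to Zariski's derivation theorem, is the core technical point, and I would model the computation on the characteristic-zero proof of \cite[Theorem~4.13]{DuarteHyp}, verifying line by line that each step survives the loss of division by integers.
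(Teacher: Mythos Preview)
Your outline is not a proof, and it contains a concrete sign error at the heart of the argument. You propose to find an \emph{extra} element in $\m^{\langle n+1\rangle}$, i.e.\ to enlarge the differential power, and then assert that this ``forces $\dim_\KK(R/\m^{\langle n+1\rangle})>\binom{n+d}{d}$''. Enlarging an ideal shrinks the quotient, so the inequality should read $<$. In fact the inequality $\dim_\KK(R/\m^{\langle n+1\rangle})\le\binom{n+d}{d}$ \emph{always} holds, since by Proposition~\ref{PropDifPowerFreeRank} this dimension equals the free rank of $\cP^n_{R|\KK}$, which cannot exceed its generic rank $\binom{n+d}{d}$. So the contradiction you seek must come from a \emph{strict} inequality $<\binom{n+d}{d}$, and the element you are hunting for must be compared not against $\n^{n+1}+(f)$ (which for a singular hypersurface gives a quotient of length \emph{larger} than $\binom{n+d}{d}$) but against some ideal whose colength is exactly $\binom{n+d}{d}$; you never identify such a comparison ideal. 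Beyond this, the passage where normality is supposed to enter is a sketch (``the hard part\ldots'') that defers entirely to redoing \cite[Theorem~4.13]{DuarteHyp} line by line; nothing is actually verified.

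The paper's argument avoids all of this counting. It stays with the module of principal parts itself: Lemma~\ref{key} gives $\cP^n_{R|\KK}\cong R^{\binom{n+d}{d}}\oplus T$ with $T$ torsion; a result of Barajas--Duarte \cite[Theorem~4.3]{BarDuar} says that for a \emph{normal hypersurface} $\cP^n_{R|\KK}$ is torsion-free, so $T=0$; and freeness of $\cP^n_{R|\KK}$ of the correct rank forces $R$ to be regular \cite[Theorem~3.1]{BarDuar} (or \cite[Theorem~10.2]{BJNB}). No lifting of operators from $S$, no explicit description of $\m^{\langle n+1\rangle}$, and no characteristic-by-characteristic bookkeeping are needed. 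If you want to salvage your approach, the substantive statement you must prove is precisely that $T=0$ for normal hypersurfaces---which is exactly the cited torsion-freeness theorem---so you would end up re-deriving the input the paper simply quotes.
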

\begin{proof}
Let $x\in X$ and $R=\Oxx$. By Lemma \ref{key}, $\Pnr\cong R^{\binom{n+d}{d}}\oplus T$, where $T$ is the torsion submodule. On the other hand, $R$ normal implies that $\Pnr$ is torsion-free \cite[Theorem 4.3]{BarDuar}. Therefore $\Pnr\cong R^{\binom{n+d}{d}}$. Then $R$ is a regular ring \cite[Theorem 3.1]{BarDuar} (see also \cite[Theorem 10.2]{BJNB} for a more general statement). We conclude that $X$ is non-singular.
\end{proof}

\section*{Acknowledgments}

We thank Josep \`Alvarez Montaner, and Jack Jeffries for helpful comments.

\bibliographystyle{alpha}
\bibliography{References}

\end{document}